\documentclass[1p]{elsarticle}

\usepackage{lineno}
\modulolinenumbers[5]

\journal{~~~}










\usepackage{mathrsfs}
\usepackage{amsfonts}
\usepackage{enumerate}
\usepackage{latexsym}
\usepackage[all,cmtip]{xy}

\biboptions{numbers,sort&compress}

\usepackage[colorlinks,linkcolor=blue,citecolor=blue]{hyperref}

\usepackage{amsmath,amssymb,xspace,amsthm}
\usepackage{mathtools}
\usepackage{bbm}

\numberwithin{equation}{section}

\def\fa{\mathfrak{a}}
\def\fg{\mathfrak{g}}

\def\fs{\mathfrak{s}}

\def\cK{\mathcal{K}}

\def\cT{\mathcal{T}}

\def\bC{\mathbb{C}}
\def\bN{\mathbb{N}}
\def\bZ{\mathbb{Z}}

\def\ptl{\partial}

\newtheorem{theo}{{Theorem}}[section]
\newtheorem{lemm}[theo]{Lemma}

\newtheorem{coro}[theo]{Corollary}
\newtheorem{prop}[theo]{Proposition}

\allowdisplaybreaks

\begin{document}

\begin{frontmatter}



\title{Classification of simple Harish-Chandra modules over the $N=1$ Ramond algebra}


\author{Yan-an Cai, Dong Liu and Rencai L\"{u}}

\begin{abstract}  In this paper, we give a new approach to classify all Harish-Chandra modules for the $N=1$ Ramond algebra $\mathfrak s$ based on the so called $A$-cover theory developed in \cite{BF}.

\end{abstract}

\begin{keyword}  Virasoro algebra, $N=1$ Ramond algebra, cuspidal module, $A$-cover

\MSC[2000] 17B10, 17B65, 17B68, 17B70
\end{keyword}

\end{frontmatter}


\section{Introduction}

Superconformal algebras have a long history in mathematical physics. The simplest examples, after the Virasoro algebra itself
(corresponding to $N = 0$) are the $N = 1$ superconformal algebras: the Neveu-Schwarz algebra  and the
Ramond algebra. These infinite dimensional Lie superalgebras are also called the super-Virasoro algebras as they can be regarded as natural  super generalizations of the Virasoro algebra.  Weight modules for the super-Viraoro algebras  have been extensively investigated (cf. \cite{DLM,IK1,IK2}), for more related results we refer the reader to \cite{Rao, Ka, Ka1, KL, KS, LPX1, LPX2, Ma, MZe, MZ, S1} and references therein.
 It is an important and challenging problem to give
complete classifications of Harish-Chandra modules (simple weight modules with finite dimensional weight spaces) for superconformal algebras.  In \cite{CP}, all simple unitary weight modules with finite dimensional weight spaces over the $N=1$ superconformal algebra were classified, which includes highest and lowest weight modules. Recently simple weight modules with finite dimensional weight spaces over the $N=2$ superconformal algebra were classified in \cite{LPX}.
 With the  theory of the $A$-cover in \cite{BF} for the Virasoro algebra, \cite{XL} completed such classification for the Lie superalgebra $W_{m, n}$ (also  see \cite{BFIK}). A complete classification for the $N=1$ superconformal algebra  was  given by Su in \cite{S0}. However, the complicated computations in the proofs make it extremely difficult to follow.
In this paper, we give a new approach to classify all Harish-Chandra modules for the $N=1$ Ramond algebra $\mathfrak s$ based on the $A$-cover theory.

This paper is arranged as follows. In Section 2, we recall some
notations and collect known facts about the $N=1$ Ramond algebra $\mathfrak s$.
In Section 3, we classify all simple cuspidal modules for $\mathfrak s$. With this classification we get the main result about the calssfication of Harish-Chandra modules over $\mathfrak s$ in Section 4.

Throughout this paper, we denote by $\bZ, \bZ_+, \bN, \bC$ and $\bC^*$ the sets of all integers, non-negative integers, positive integers, complex numbers, and nonzero complex numbers, respectively. All vector spaces and algebras in this paper are over $\bC$. We denote by $U(\fa)$ the universal enveloping algebra of the Lie superalgebra $\fa$ over $\bC$. Also, we denote by $\delta_{i,j}$ the Kronecker delta.

\section{Preliminaries}

In this section, we collect some basic definitions and results for our study.

A vector superspace $V$ is a vector space endowed with a $\bZ_2$-gradation $V=V_{\bar{0}}\oplus V_{\bar{1}}$. The parity of a homogeneous element $v\in V_{\bar{i}}$ is denoted by $|v|=\bar{i}\in\bZ_2$. Throughout this paper, when we write $|v|$ for an element $v\in V$, we will always assume that $v$ is a homogeneous element.

The $N=1$ Ramond algebra $\fs$ is the Lie superalgebra with basis $\{L_n,G_n,C\,|\, n\in\bZ\}$ and brackets
\begin{align*}
[L_m,L_n]&=(n-m)L_{m+n}+\delta_{m+n,0}\frac{1}{12}(n^3-n)C,\\
[L_m,G_p]&=(p-\frac{m}{2})G_{p+m},\\
[G_p,G_q]&=-2L_{p+q}+\delta_{p+q,0}\frac{4p^2-1}{12}C.
\end{align*}
The even part of $\fs$ is spanned by $\{L_n, C\,|\,n\in\bZ\}$, and is isomorphic to the Virasoro algebra, the universal central extension of the Witt algebra $\frak w$. The odd part of $\fs$ is spanned by $\{G_n\,|\, n\in\bZ\}$. Let $\bar{\fs}$ be the quotient algebra $\fs/\bC C$.

Let $A=\bC[t^{\pm1}]\otimes\Lambda(1)$, where $\Lambda(1)$ is the Grassmann algebra in one variable $\xi$. $A$ is $\bZ_2$-graded with $|t|=\bar{0}$ and $|\xi|=\bar{1}$. $A$ is an $\bar{\fs}$-module with
\begin{align*}
L_n\circ x&=t^{n+1}\ptl_t(x)+\frac{n}{2}t^n\xi\ptl_\xi(x),\\
G_n\circ x&=t^{n+1}\xi\ptl_t(x)-t^n\ptl_\xi(x),
\end{align*}
where $n\in\bZ,x\in A,\ptl_t=\frac{\ptl}{\ptl t},\ptl_\xi=\frac{\ptl}{\ptl \xi}$. So, we have Lie superalgebra $\widetilde{\fs}=\bar{\fs}\ltimes A$ with $A$ an abelian Lie superalgebra and $[x,y]=x\circ y, x\in\fs, y\in A$.

On the other hand, $\bar{\fs}$ has a natural $A$-module structure
\begin{equation}\label{A-action}
t^iL_n:=L_{n+i}, t^iG_n:=G_{n+i}, \xi L_n=\frac{1}{2}G_n, \xi G_n=0, \forall n,i\in\bZ.
\end{equation}
And $\bar{\fs}$ is an $\widetilde{\fs}$-module with adjoint $\bar{\fs}$-actions and $A$ acting as  \eqref{A-action}:
\begin{align*}
&[L_m, t^iL_n]-t^i[L_m,L_n]=[L_m,L_{n+i}]-(n-m)t^iL_{m+n}=iL_{m+n+i}=it^{m+i}L_n,\\
&[L_m,\xi L_n]-\xi[L_m,L_n]=[L_m,\frac{1}{2}G_n]-(n-m)\xi L_{m+n}=\frac{1}{4}mG_{m+n}=\frac{m}{2}t^m\xi L_n,\\
&[L_m,t^i G_n]-t^i[L_m,G_n]=[L_m,G_{n+i}]-(n-\frac{m}{2})t^iG_{m+n}=iG_{m+n+i}=it^{m+i}G_n,\\
&[L_m,\xi G_n]-\xi[L_m,G_n]=0,\\
&[G_m,t^iL_n]-t^i[G_m,L_n]=[G_m,L_{n+i}]+(m-\frac{n}{2})t^iG_{m+n}=\frac{i}{2}G_{m+n+i}=it^{m+i}\xi L_n,\\
&[G_m,t^iG_n]-t^i[G_m,G_n]=[G_m,G_{n+i}]+2t^iL_{m+n}=0,\\
&[G_m,\xi L_n]+\xi[G_m,L_n]=\frac{1}{2}[G_m,G_n]=-L_{m+n}=-t^mL_n,\\
&[G_m,\xi G_n]+\xi[G_m,G_n]=-2\xi L_{m+n}=-G_{m+n}=-t^mG_n.
\end{align*}

An $A\bar{\fs}$-module is an $\widetilde{\fs}$-module with $A$ acting associatively. Let $U=U(\widetilde{\fs})$ and $I$ be the left ideal of $U$ generated by $t^i\cdot t^j-t^{i+j}, t^0-1, t^i\cdot \xi-t^i\xi$ and $\xi\cdot \xi$ for all $i,j\in\bZ$. Then it is clear $I$ is an ideal of $U$. Let $\overline{U}=U/I$. Then the category of $A\bar{\fs}$-modules is naturally equivalent to the category of $\overline{U}$-modules.

Let $\fg$ be any of $\widetilde{\fs},\fs,\bar{\fs}$. A $\fg$-module $M$ is called a \emph{weight} module if the action of $L_0$ on $M$ is diagonalizable. Let $M$ be a weight $\fg$-module. Then $M=\bigoplus\limits_{\lambda\in\bC}M_\lambda$, where $M_\lambda=\{v\in M\,|\, L_0v=\lambda v\}$, called the weight space of weight $\lambda$. The support of $M$ is $\mathrm{Supp}(M):=\{\lambda\in\bC\,|\,M_\lambda\neq0\}$. A weight $\fg$-module is called \emph{cuspidal} or \emph{uniformly bounded} if the dimension of weight spaces of $M$ is uniformly bounded, that is there is $N\in\bN$ such that $\dim M_\lambda<N$ for all $\lambda\in\mathrm{Supp}(M)$. Clearly, if $M$ is simple, then $\mathrm{Supp}(M)\subseteq\lambda+\bZ$ for some $\lambda\in\bC$.

Let $\sigma: L\to L'$ be any homomorphism of Lie superalgebras or associative superalgebras, and $M$ be any $L'$-module. Then $M$ become an $L$-module, denoted by $M^\sigma$, under $x\cdot v:=\sigma(x)v, \forall x\in L, v\in M$. Denote by $T$ the automorphism of $L$ defined by $T(x):=(-1)^{|x|}x, \forall x\in L$. For any $L$-module $M$, $\Pi(M)$ is the module defined by a parity-change of $M$.

A module $M$ over an associative superalgebra $B$ is called \emph{strictly simple} if it is a simple module over the associative algebra $B$ (forgetting the $\bZ_2$-gradation).

We need the following result on tensor modules over tensor superalgebras.
\begin{lemm}[{\cite[Lemma 2.1, 2.2]{XL}}]\label{tensor}
Let $B,B'$ be associative superalgebras, and $M,M'$ be $B,B'$ modules, respectively.
\begin{enumerate}
\item $M\otimes M'\cong\Pi(M)\otimes\Pi(M'^T)$ as $B\otimes B'$-modules.
\item If in addition that $B'$ has a countable basis and $M'$ is strictly simple, then
\begin{enumerate}
\item Any $B\otimes B'$-submodule of $M\otimes M'$ is of the form $N\otimes M'$ for some $B$-submodule $N$ of $M$;
\item Any simple quotient of the $B\otimes B'$-module $M\otimes M'$ is isomorphic to some $\overline{M}\otimes M'$ for some simple quotient $\overline{M}$ of $M$.
\item $M\otimes M'$ is a simple $B\otimes B'$-module if and only if $M$ is a simple $B$-module.
\item If $V$ is a simple $B\otimes B'$-module containing a strictly simple $B'=\bC\otimes B'$ module $M'$, then $V\cong M\otimes M'$ for some simple $B$-module $M$.
\end{enumerate}
\end{enumerate}
\end{lemm}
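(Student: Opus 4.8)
The plan is to handle part (1) as a direct computation with Koszul signs and to derive all of part (2) from two classical facts about the strictly simple module $M'$: a Schur-type vanishing and the Jacobson density theorem. For (1), I would check that the \emph{identity map} on the underlying superspace $M\otimes M'$ intertwines the two $B\otimes B'$-actions. With the convention $(b\otimes b')(m\otimes m')=(-1)^{|b'||m|}(bm)\otimes(b'm')$, on the target $\Pi(M)\otimes\Pi(M'^T)$ the parity of $m$ is shifted by $\bar 1$ and $b'$ acts through $T$, i.e. as $(-1)^{|b'|}b'$. The resulting sign is $(-1)^{|b'|(|m|+1)}\cdot(-1)^{|b'|}=(-1)^{|b'||m|}$, so the two actions coincide and the identity is the desired isomorphism. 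This is the only place where the twist $T$ and the parity change $\Pi$ are actually needed, and the content is purely the cancellation of Koszul signs.

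For part (2) the key preliminary is that $\operatorname{End}_{B'}(M')=\bC$. Strict simplicity and Schur's lemma make this a division algebra over $\bC$; since $B'$ has a countable basis and $M'$ is cyclic, both $M'$ and $\operatorname{End}_{B'}(M')$ are countable-dimensional, and a countable-dimensional division algebra over the algebraically closed uncountable field $\bC$ must equal $\bC$ (a transcendental element $d$ would force the uncountably many elements $1/(d-\lambda)$, $\lambda\in\bC$, to be linearly independent). With this in hand, the Jacobson density theorem guarantees that for any linearly independent $m'_1,\dots,m'_k\in M'$ and arbitrary prescribed targets there exists $b'\in B'$ realizing those values.

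Now for (a): given a nonzero submodule $W$ and $w=\sum_i m_i\otimes m'_i\in W$ with the $m'_i$ linearly independent, I apply elements $1\otimes b'$ chosen by density to project onto a single summand, obtaining $m_i\otimes m'_i\in W$, and then sweep $m'_i$ across all of $M'$ to get $m_i\otimes M'\subseteq W$ (the stray Koszul signs are invertible and harmless). Setting $N=\{m\in M: m\otimes M'\subseteq W\}$, which is visibly a $B$-submodule, yields $W=N\otimes M'$. Parts (c) and (b) are then formal: if $M$ is simple, (a) leaves only the submodules $0$ and $M\otimes M'$, while a proper nonzero $B$-submodule of $M$ tensors up to a proper nonzero submodule, giving the equivalence in (c); and any simple quotient is $(M\otimes M')/(N\otimes M')\cong(M/N)\otimes M'$ with $M/N$ simple by (c), which is (b). For (d), I would set $M:=\operatorname{Hom}_{B'}(M',V)$, a $B$-module via the commuting $B$-factor, and consider the evaluation map $F:M\otimes M'\to V$, $\phi\otimes m'\mapsto\phi(m')$, which is a $B\otimes B'$-homomorphism. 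It is nonzero because the inclusion $M'\hookrightarrow V$ is a nonzero element of $M$, hence $F$ is surjective by simplicity of $V$; its kernel is a submodule, so of the form $N\otimes M'$ by (a), and therefore $V\cong(M/N)\otimes M'$ with $M/N$ simple by (c).

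I expect the main obstacle to be bookkeeping the $\bZ_2$-grading throughout part (2): one must verify that the density argument and the induced $B$-module structures on $N$, on $M/N$, and on $\operatorname{Hom}_{B'}(M',V)$ are all compatible with the Koszul signs, and in particular that projecting with $1\otimes b'$ together with the parity conventions never destroys linear independence or rescales a coefficient by a non-invertible scalar. The conceptual skeleton (Schur plus countability, then density) is entirely standard; the super signs are where the care lies.
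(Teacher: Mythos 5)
First, a point of reference: the paper does not prove this lemma at all --- it is quoted verbatim from \cite[Lemmas 2.1, 2.2]{XL} --- so your proposal can only be measured against the standard argument, which is indeed the Schur-plus-density skeleton you describe. Within that skeleton, your part (1) computation is correct, your proof that $\End_{B'}(M')=\bC$ (countable dimension over the uncountable algebraically closed field $\bC$) is correct, and the deductions of (b), (c), (d) from (a) are sound, including the evaluation-map construction in (d).

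The genuine gap is in (a), exactly at the sentence ``the stray Koszul signs are invertible and harmless.'' They are neither, and this is not bookkeeping: the statement you would prove if they were harmless is \emph{false}. The operator $1\otimes b'$ does not act diagonally on $\sum_i m_i\otimes m'_i$: it acts as $b'$ on $M_{\bar 0}\otimes M'$ but as $T(b')$ on $M_{\bar 1}\otimes M'$, and Jacobson density produces a $b'$ that is in general not homogeneous, whose values $b'm'_i$ and $T(b')m'_i$ you cannot prescribe simultaneously. Concretely, since the parity involution $v'\mapsto(-1)^{|v'|}v'$ is an (ungraded, parity-preserving-in-the-wrong-sense) module isomorphism $M'\to M'^T$, taking $B=\bC$, $M=\bC^{1|1}$ with homogeneous basis $m_0,m_1$, the ``graph'' $W=\{\,m_0\otimes v'+m_1\otimes(-1)^{|v'|}v'\ :\ v'\in M'\,\}$ is an ungraded $B\otimes B'$-submodule of $M\otimes M'$ that is not of the form $N\otimes M'$ (whenever $M'$ has both parities, e.g.\ $M'=A(\lambda)$). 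So (a) is a statement about $\bZ_2$-\emph{graded} submodules, and any correct proof must use gradedness essentially; your sweep step, applied to the element above (whose $M'$-components are linearly independent when $v'$ is inhomogeneous), would ``project'' onto summands that simply do not lie in $W$. The repair is standard but is precisely the missing idea: since $W$ is graded, take $w\in W$ homogeneous and write $w=\sum_i m_i\otimes m'_i$ with all $m_i,m'_i$ homogeneous and the $m'_i$ linearly independent; then upgrade density to a \emph{homogeneous} $b'$ (if some $b'$ solves $b'm'_i=\delta_{i1}v'$ with homogeneous data, a parity comparison shows its even or odd component already solves it). With $b'$ homogeneous the sign really is a global $\pm1$, your sweep gives $m_i\otimes M'\subseteq W$, and the rest of your argument, including the application of (a) to $\ker F$ in (d) (which is graded because $F$ is even), goes through.
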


Let $\cK$ be the Weyl superalgebra $A[\ptl_t,\ptl_\xi]$. All simple weight $\cK$-modules are classified in \cite{XL}.
\begin{lemm}[{\cite[Lemma 3.5]{XL}}]\label{Kmod}
Any simple weight $\cK$-module is isomorphic to some $A(\lambda)$ for some $\lambda\in\bC$ up to a parity-change, here $A(\lambda)\cong \cK/I_\lambda$ with $I_\lambda$ the left ideal of $\cK$ generated by $t\ptl_t-\lambda,\ptl_\xi$.
\end{lemm}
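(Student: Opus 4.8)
The plan is to exploit the tensor factorization of the Weyl superalgebra and then invoke Lemma \ref{tensor}. Write $\cK = \cW \otimes \cC$ as a tensor product of associative superalgebras, where $\cW = \bC[t^{\pm1},\ptl_t]$ is the (purely even) localized Weyl algebra and $\cC = \langle\xi,\ptl_\xi\rangle$ is the subalgebra generated by the odd variables, subject to $\xi^2 = \ptl_\xi^2 = 0$ and $\xi\ptl_\xi + \ptl_\xi\xi = 1$. First I would analyse $\cC$: setting $e_{11} = \xi\ptl_\xi$, $e_{22} = \ptl_\xi\xi$, $e_{12} = \xi$, $e_{21} = \ptl_\xi$, one checks directly that these form a system of matrix units, so $\cC \cong M_{1|1}(\bC)$ as a superalgebra and $\cC \cong M_2(\bC)$ as an ungraded algebra. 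In particular $\cC$ is a four-dimensional simple (hence semisimple) algebra whose unique simple module is $M' := \cC/\cC\ptl_\xi$, of dimension $1|1$; viewed over $M_2(\bC)$ it is the standard two-dimensional module, so it is strictly simple.

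Now let $V$ be a simple weight $\cK$-module. Restricting to $\cC = \bC\otimes\cC$ and using that $M_2(\bC)$ is semisimple, $V$ decomposes as a direct sum of copies of $M'$; in particular $V$ contains the strictly simple $\cC$-module $M'$. Since $\cC$ has a finite, hence countable, basis, Lemma \ref{tensor}(2)(d) applies and yields $V \cong M\otimes M'$ for some simple $\cW$-module $M$. Because the grading operator $t\ptl_t$ (which acts on $A$ as $L_0$) lies in $\cW$ and acts on $M\otimes M'$ through its first factor, $V$ is a weight module precisely when $M$ is; thus $M$ is a simple weight $\cW$-module. This reduces the classification to the purely even problem of describing simple weight modules over the localized Weyl algebra.

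For the latter I would argue that invertibility of $t$ rigidifies the module: if $M = \bigoplus_\mu M_\mu$ is a weight $\cW$-module, then $t$ maps $M_\mu$ bijectively onto $M_{\mu+1}$, while $\ptl_t = t^{-1}(t\ptl_t)$ acts on $M_\mu$ as multiplication by $\mu t^{-1}$. Hence the entire $\cW$-action is determined by the spectrum of $t\ptl_t$, all weight spaces of a simple $M$ are one-dimensional with support a single coset $\lambda + \bZ$, and $M \cong \cW/\cW(t\ptl_t - \lambda)$. Reassembling, $A(\lambda) = \cK/I_\lambda \cong \bigl(\cW/\cW(t\ptl_t-\lambda)\bigr)\otimes\bigl(\cC/\cC\ptl_\xi\bigr) \cong M\otimes M'$, so $V \cong A(\lambda)$ up to the parity change distinguishing $M'$ from $\Pi(M')$, which is exactly the asserted classification.

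The main obstacle is not any single computation but correctly setting up the tensor machinery: verifying that $\cK$ genuinely is the super tensor product $\cW\otimes\cC$ (so that the signs produced by the odd variables are absorbed into the superalgebra tensor product), that the grading operator sits in the even factor, and that the hypotheses of Lemma \ref{tensor}(2)(d) — strict simplicity of $M'$ together with its occurrence inside $V$ — really hold. Once these structural points are secured, the remaining ingredients, namely the matrix-unit identification of $\cC$ and the weight classification for the localized Weyl algebra, are routine.
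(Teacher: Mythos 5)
The paper never actually proves this lemma: it is imported wholesale from \cite[Lemma 3.5]{XL}, so there is no internal argument to compare against. Your proposal supplies a genuine, essentially self-contained proof, and it is correct in substance and close in spirit to what the citation hides: factor $\cK=\cW\otimes\mathcal{C}$ with $\cW=\bC[t^{\pm1},\ptl_t]$ purely even and $\mathcal{C}=\langle\xi,\ptl_\xi\rangle\cong M_2(\bC)$ via the matrix units you list; use the strictly simple module $M'=\mathcal{C}/\mathcal{C}\ptl_\xi$ to invoke Lemma \ref{tensor}(2)(d); then classify simple weight $\cW$-modules directly (invertibility of $t$ together with $\ptl_t=t^{-1}(t\ptl_t)$ forces one-dimensional weight spaces and $M\cong\cW/\cW(t\ptl_t-\lambda)$); and finally match $\cK/I_\lambda\cong\bigl(\cW/\cW(t\ptl_t-\lambda)\bigr)\otimes\bigl(\mathcal{C}/\mathcal{C}\ptl_\xi\bigr)$. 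All of these computations check out, and what your route buys is a complete proof using only Lemma \ref{tensor}, which the paper has already quoted, instead of an external reference.

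One step should be tightened. ``Using that $M_2(\bC)$ is semisimple, $V$ decomposes as a direct sum of copies of $M'$'' is an \emph{ungraded} statement, whereas Lemma \ref{tensor}(2)(d) needs a $\bZ_2$-graded $\mathcal{C}$-submodule of $V$; as supermodules $\mathcal{C}$ has two non-isomorphic simples, $M'$ and $\Pi(M')$ (there is no even isomorphism between them, since the even generator of $M'$ is killed by $\ptl_\xi$ while the even generator of $\Pi(M')$ is not), so the graded decomposition a priori involves both. The repair is one line: choose a nonzero homogeneous $v\in V$ with $\ptl_\xi v=0$ (replace $v$ by $\ptl_\xi v$ if necessary, which works because $\ptl_\xi^2=0$); then $\bC v+\bC\xi v$ is a graded $\mathcal{C}$-submodule isomorphic to $M'$ or $\Pi(M')$, both strictly simple, and Lemma \ref{tensor}(2)(d) together with Lemma \ref{tensor}(1) yields $V\cong M\otimes M'$ or $V\cong\Pi(M\otimes M')$. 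This is precisely the ``up to a parity-change'' allowed in the statement, which your final sentence does account for; with that adjustment the proof is complete.
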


Also,  the following results about $(t-1)\bar{\fs}\subset \bar\fs$ follow from (\ref{A-action}) directly.
\begin{lemm}\label{rel-subalg}
Let $k,\ell\in\bZ_+$. Then for all $i,j\in\bZ$,
\begin{align*}
&[(t-1)^kL_i,(t-1)^\ell L_j]=(\ell-k+j-i)(t-1)^{k+\ell}L_{i+j}+(\ell-k)(t-1)^{k+\ell-1}L_{i+j},\\
&[(t-1)^kL_i,(t-1)^\ell G_j]=(j-\frac{i}{2})(t-1)^{k+\ell}G_{i+j}+(\ell-\frac{k}{2})(t-1)^{k+\ell-1}G_{i+j+1},\\
&[(t-1)^kG_i,(t-1)^\ell G_j]=-2(t-1)^{k+\ell}L_{i+j}.
\end{align*}
\end{lemm}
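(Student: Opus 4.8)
The plan is to reduce all three identities to a single Leibniz-type rule together with the elementary action of $\bar{\fs}$ on powers of $t-1$ inside $A$. The key observation is that the eight compatibility computations displayed just above this lemma are precisely the assertion that, for all homogeneous $X,Y\in\bar{\fs}$ and $a\in A$,
\begin{equation*}
[X,aY]=(X\circ a)Y+(-1)^{|X||a|}a[X,Y],
\end{equation*}
where $\circ$ is the $\bar{\fs}$-action on $A$ and $a\cdot(-)$ is the $A$-action \eqref{A-action} on $\bar{\fs}$. Since $\bar{\fs}$ is an $\widetilde{\fs}$-module, this rule holds with $X$ replaced by \emph{any} element of $\bar{\fs}$, in particular by $(t-1)^kL_i$ or $(t-1)^kG_i$. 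This rule is the only structural input; everything else is bookkeeping.

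First I would record the atomic computations that feed the rule. For $f\in\bC[t^{\pm1}]$ (so $\ptl_\xi f=0$) the definitions give $L_i\circ f=t^{i+1}f'$ and $G_i\circ f=t^{i+1}f'\xi$; taking $f=(t-1)^\ell$ yields
\begin{equation*}
L_i\circ(t-1)^\ell=\ell\,t^{i+1}(t-1)^{\ell-1},\qquad G_i\circ(t-1)^\ell=\ell\,t^{i+1}(t-1)^{\ell-1}\xi.
\end{equation*}
Expanding $(t-1)^k=\sum_p\binom{k}{p}(-1)^{k-p}t^p$ and using additivity of $\circ$ together with $t^aX_b=X_{a+b}$, the $t^p$ reassemble into one factor $(t-1)^k$, so the same formulas hold with $L_i,G_i$ replaced by $(t-1)^kL_i,(t-1)^kG_i$ and the output power promoted to $(t-1)^{k+\ell-1}$. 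Together with the base brackets in $\bar{\fs}$ (the stated brackets with $C$ dropped), the associativity $(t-1)^k(t-1)^\ell=(t-1)^{k+\ell}$, and the relations $\xi L_j=\tfrac12G_j$, $\xi G_j=0$, this is all the input needed.

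The computation is then mechanical: apply the Leibniz rule to peel $(t-1)^\ell$ off the second slot, use super-antisymmetry and the rule once more to peel $(t-1)^k$ off the first slot, and collect terms. Two points are genuinely non-routine. First, the factor $t^{i+1}$ (rather than $t^i$) produced by $L_i\circ$ and $G_i\circ$ shifts the index of the bracketed generator by $+1$; rewriting $t^{i+1}X_j=X_{i+j+1}$ and then $X_{i+j+1}=(t-1)X_{i+j}+X_{i+j}$ is exactly what converts this unit shift into the simultaneous appearance of the $(t-1)^{k+\ell}$ and $(t-1)^{k+\ell-1}$ terms, and is the source of the surviving $G_{i+j+1}$ in the second identity. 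Second, the Koszul signs in the odd sectors must be tracked carefully: in the $GG$-identity the derivation term is annihilated by $\xi G_j=0$, so only the base bracket $[G_i,G_j]=-2L_{i+j}$ survives, which explains why that identity collapses to $-2(t-1)^{k+\ell}L_{i+j}$. I expect this sign-and-shift bookkeeping in the $LG$ and $GG$ cases to be the main (though still routine) obstacle; the $LL$ case is the purely even computation and mirrors the Witt-algebra identity underlying the $A$-cover method.
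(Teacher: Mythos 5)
Your proposal is correct and matches the paper's intent: the Leibniz rule $[X,aY]=(X\circ a)Y+(-1)^{|X||a|}a[X,Y]$ you isolate is exactly the $\widetilde{\fs}$-module compatibility verified in the displayed computations preceding the lemma, and peeling off $(t-1)^\ell$ and then $(t-1)^k$ with it---using $t^{i+1}X_j=X_{i+j+1}=(t-1)X_{i+j}+X_{i+j}$ and $\xi G_j=0$ exactly where you indicate---reproduces all three identities with the stated coefficients. The paper offers no argument beyond asserting that the lemma ``follows from \eqref{A-action} directly,'' so your write-up is essentially the same approach, just made explicit.
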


From Lemma \ref{rel-subalg}, we get
\begin{lemm}\label{ideal}
For $k\in\bN$, let $\fa_k=(t-1)^k\bar{\fs}$. Then
\begin{enumerate}
\item $\fa_1$ is a Lie subsuperalgebra of $\bar{\fs}$;
\item $\fa_k$ is an ideal of $\fa_1$ and $\fa_1/\fa_2$ is a two dimensional Lie superalgebra with bosonic basis $X$ and femionic basis $Y$ and nontrivial brackets $[X,Y]=\frac{1}{2}Y$.
\item The ideal generated by $\{(t-1)^kL_m\,|\, m\in\bZ\}$ is $\fa_k$.
\end{enumerate}
\end{lemm}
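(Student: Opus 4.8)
The plan is to prove the two inclusions separately, writing $J$ for the ideal of $\fa_1$ generated by $\{(t-1)^kL_m\mid m\in\bZ\}$. The inclusion $J\subseteq\fa_k$ is immediate: each generator lies in $\fa_k$, and by part (2) $\fa_k$ is an ideal of $\fa_1$, so the smallest ideal of $\fa_1$ containing the generators is contained in $\fa_k$. The whole content is the reverse inclusion $\fa_k\subseteq J$.

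First I would dispose of the even part. Since the $A$-action is associative and $(t-1)^kL_m=\big((t-1)^kt^m\big)L_0$, as $m$ ranges over $\bZ$ these span exactly $(t-1)^k\bC[t^{\pm1}]L_0$, i.e. the full even part of $\fa_k$; so the even part is in $J$ by definition. It remains to produce the odd elements $(t-1)^kG_m$. For this I would bracket a generator against the odd elements $(t-1)G_j\in\fa_1$: by Lemma \ref{rel-subalg} (with the second exponent $\ell=1$),
\[
[(t-1)^kL_i,(t-1)G_j]=(j-\tfrac{i}{2})(t-1)^{k+1}G_{i+j}+(1-\tfrac{k}{2})(t-1)^kG_{i+j+1}\in J .
\]
Choosing $i+j=n-1$ (so the last term is $(t-1)^kG_n$) and letting $i$ vary, the coefficient $j-\tfrac{i}{2}$ of the leading term $(t-1)^{k+1}G_{n-1}$ takes infinitely many distinct values; subtracting two such relations cancels the $(t-1)^kG_n$ term and yields $(t-1)^{k+1}G_m\in J$ for all $m$. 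Back-substituting into a single relation then gives $(1-\tfrac{k}{2})(t-1)^kG_n\in J$.

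Provided $1-\tfrac{k}{2}\neq 0$ I can now divide to conclude $(t-1)^kG_n\in J$ for every $n$, which together with the even part gives $\fa_k\subseteq J$ and hence $\fa_k=J$. I expect the vanishing of this coefficient at $k=2$ to be the main obstacle. Organizing the computation through the filtration $\fa_1\supseteq\fa_2\supseteq\cdots$ and its associated graded $\bigoplus_j\fa_j/\fa_{j+1}$, one sees that the bracket of two elements of levels $p$ and $q$ lands in level $\geq p+q-1$, so brackets of the level-$k$ generators with $\fa_1$ (level $\geq 1$) never drop below level $k$, while the only bracket whose level-$k$ output is odd is the one above, whose odd level-$k$ coefficient is exactly $1-\tfrac{k}{2}$. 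For $k=2$ this coefficient is $0$, so this route reaches only $(t-1)^3G_m$ and does not produce $(t-1)^2G_m$; I would therefore single out $k=2$ and treat the recovery of the odd generators in that case (or the exceptional coefficient itself) as the crux requiring a separate argument.
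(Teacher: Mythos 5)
Your strategy is the natural one---and, as far as one can tell, the paper's own, since the paper offers no proof beyond ``From Lemma \ref{rel-subalg}, we get''---and everything you actually prove is correct: the inclusion $J\subseteq\fa_k$, the identification of the even part of $\fa_k$ with the span of the generators, the subtraction trick giving $(t-1)^{k+1}G_m\in J$ for all $m$, and the back-substitution giving $(1-\tfrac{k}{2})(t-1)^kG_n\in J$. For $k\neq2$ this settles part (3). But the gap you flag at $k=2$ cannot be filled, because the statement is in fact \emph{false} for $k=2$; your filtration remark, pushed one step further, is precisely the proof. Let $V=(t-1)^2\bC[t^{\pm1}]L_0\oplus(t-1)^3\bC[t^{\pm1}]G_0$, i.e.\ the span of all $(t-1)^2L_m$ and all $(t-1)^3G_m$. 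By Lemma \ref{rel-subalg},
\begin{align*}
[(t-1)L_i,(t-1)^2L_j]&=(j-i+1)(t-1)^3L_{i+j}+(t-1)^2L_{i+j},\\
[(t-1)^2L_j,(t-1)G_i]&=(i-\tfrac{j}{2})(t-1)^3G_{i+j}+\bigl(1-\tfrac{2}{2}\bigr)(t-1)^2G_{i+j+1},\\
[(t-1)L_i,(t-1)^3G_j]&=(j-\tfrac{i}{2})(t-1)^4G_{i+j}+\tfrac{5}{2}(t-1)^3G_{i+j+1},\\
[(t-1)G_i,(t-1)^3G_j]&=-2(t-1)^4L_{i+j},
\end{align*}
and every term on the right lies in $V$; the dangerous second line is saved exactly by the vanishing of your coefficient $1-\tfrac{k}{2}$ at $k=2$. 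So $V$ is an ideal of $\fa_1$ containing all the generators $(t-1)^2L_m$, whence $J\subseteq V$ (in fact $J=V$); but $(t-1)^2G_m\notin V$, since $(t-1)^2t^m\in(t-1)^3\bC[t^{\pm1}]$ would force $t^m\in(t-1)\bC[t^{\pm1}]$, which fails at $t=1$. Hence $J\subsetneq\fa_2$.

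So the defect is the paper's, not yours, and it is harmless for what follows. What your computation proves uniformly in $k$ is the weaker statement $J\supseteq\fa_{k+1}$: the even part of $\fa_{k+1}$ already lies in the span of the generators, and the odd part $(t-1)^{k+1}G_m$ is supplied by your subtraction step. That weaker statement is all the paper ever uses: in the lemma on finite dimensional $(t-1)\bar{\fs}$-modules one knows $(t-1)^k\frak{w}\,V=0$, observes that the annihilator of $V$ in $\fa_1$ is an ideal containing all $(t-1)^kL_m$, and needs only that \emph{some} $\fa_{k'}$ annihilates $V$; concluding with $k'=k+1$ instead of $k$ changes nothing. The correct repair is therefore to restate part (3) as ``the ideal of $\fa_1$ generated by $\{(t-1)^kL_m\mid m\in\bZ\}$ contains $\fa_{k+1}$, and equals $\fa_k$ if and only if $k\neq2$,'' with $k=2$ being exactly the exception you isolated.
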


\begin{lemm}
Let $L=\bC X+\bC Y$ be the Lie superalgebra with $|X|=\bar{0}, |Y|=\bar{1}$ and $[X,Y]=\frac{1}{2}Y,[Y,Y]=0$. Then any simple finite dimensional $L$-module is one dimensional with $X.v=b v, Y.v=0$ for some $b\in\bC$.
\end{lemm}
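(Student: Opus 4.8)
The plan is to work throughout in the category of $\bZ_2$-graded (super) $L$-modules, so that ``simple'' means having no nonzero proper \emph{graded} submodule, and to exploit two operator identities coming from the brackets. First I would record that, since $Y$ is odd, the super-commutator $[Y,Y]$ acts on any $L$-module $M$ as $2Y^2$; as $[Y,Y]=0$ in $L$ this forces $Y^2=0$ as an operator on $M$. Second, since $X$ is even, the relation $[X,Y]=\tfrac12 Y$ becomes the ordinary commutator identity $XY-YX=\tfrac12 Y$ on $M$.

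Next I would show that $\ker Y$ and $\operatorname{im} Y$ are both graded submodules of $M$. Because $Y$ is an odd operator it is homogeneous, hence $\ker Y$ and $\operatorname{im} Y$ are $\bZ_2$-graded subspaces. Stability under $Y$ is immediate from $Y^2=0$, which gives $Y(\ker Y)=0$, $Y(\operatorname{im} Y)=0$ and also $\operatorname{im} Y\subseteq\ker Y$. Stability under $X$ follows from the commutator identity: for $v\in\ker Y$ one gets $Y(Xv)=XYv-\tfrac12 Yv=0$, and for $w=Yv\in\operatorname{im} Y$ one gets $Xw=Y(Xv)+\tfrac12 Yv\in\operatorname{im} Y$.

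Now suppose $M$ is simple and $Y\neq 0$ on $M$. Then $\operatorname{im} Y$ is a nonzero graded submodule contained in $\ker Y$, and $\ker Y\subsetneq M$ because $Y\neq 0$; this contradicts simplicity. Hence $Y$ acts as $0$, and $M$ reduces to a module over the one-dimensional abelian Lie algebra $\bC X$. Since $\bC$ is algebraically closed and $M$ is finite dimensional, $X$ has an eigenvalue $b$; as $X$ is even its $b$-eigenspace is a nonzero graded subspace, and (with $Y=0$) it is automatically a submodule, so it equals $M$ and $X=b\cdot\mathrm{id}$. Finally, once $X$ is scalar and $Y=0$ every graded subspace is a submodule, so simplicity forces $\dim M=1$ with $M$ homogeneous; this is exactly the asserted description $X.v=bv$, $Y.v=0$.

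The only real subtlety, the ``hard part'' such as it is, is the bookkeeping in the super setting: making sure that simplicity is taken relative to graded submodules, that the sign in the super-commutator yields $Y^2=0$ rather than an anticommutator relation, and that the kernel, image and eigenspace used above are genuinely $\bZ_2$-graded. Once these points are in place the argument is purely formal.
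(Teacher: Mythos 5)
Your proof is correct. Note that the paper states this lemma without any proof at all (it is treated as an elementary fact), so there is nothing to compare against; your write-up supplies exactly the standard argument one would expect: $Y^2=0$ from the odd bracket $[Y,Y]=0$, the observation that $\operatorname{im}Y\subseteq\ker Y$ are graded submodules (using $XY-YX=\tfrac12 Y$), which forces $Y=0$ on a simple module, and then the eigenvalue argument for $X$ reducing to dimension one. Your care about the convention that simplicity is taken with respect to graded submodules is also consistent with the paper, which reserves the term ``strictly simple'' for simplicity after forgetting the $\bZ_2$-gradation.
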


\begin{lemm}[{\cite[Theorem 2.1]{M}}, Engel's Theorem for Lie superalgebras]\label{Engel}
Let $V$ be a finite dimensional module for the Lie superalgebra $L=L_{\bar{0}}\oplus L_{\bar{1}}$ such that the elements of $L_{\bar{0}}$ and $L_{\bar{1}}$ respectively are nilpotent endomorphisms of $V$. Then there exists a nonzero element $v\in V$ such that $xv=0$ for all $x\in L$.
\end{lemm}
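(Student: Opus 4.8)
The plan is to adapt the classical proof of Engel's theorem to the $\bZ_2$-graded setting, running an induction on $\dim L$ and tracking parity at each step. First I would reduce to the case $L\subseteq\gl(V)$: the representation $\rho\colon L\to\gl(V)$ is an even homomorphism, its kernel is a graded ideal, and a common null vector for $\rho(L)$ is a common null vector for $L$; moreover the homogeneous components of $\rho(L)$ still act nilpotently. (Here I assume $V\neq0$, without which the conclusion is vacuous.) So assume $L\subseteq\gl(V)$ with $L_{\bar{0}}$ and $L_{\bar{1}}$ consisting of nilpotent operators.

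The technical heart is the super analogue of Engel's lemma: if $x\in\gl(V)$ is homogeneous and nilpotent, then $\mathrm{ad}\,x$ is a nilpotent operator on $\gl(V)$. To see this, write $\lambda(u)=xu$ and $r(u)=ux$ for left and right multiplication; these commute and are nilpotent since $\lambda^k(u)=x^ku$ and $r^k(u)=ux^k$. For $x$ even one has $\mathrm{ad}\,x=\lambda-r$, a difference of commuting nilpotents, hence nilpotent. For $x$ odd, letting $P$ denote the parity operator $P(u)=(-1)^{|u|}u$, one has $\mathrm{ad}\,x=\lambda-rP$; using $\lambda P=-P\lambda$ and $rP=-Pr$ a short computation gives $(\mathrm{ad}\,x)^2=\lambda^2-r^2$, again a difference of commuting nilpotents. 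In either case $\mathrm{ad}\,x$ is nilpotent, and its restriction to any $\mathrm{ad}$-invariant graded subspace (such as $L$ itself or a quotient $L/K$) remains nilpotent.

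With this in hand I would induct on $\dim L$. Choose a maximal proper graded subalgebra $K\subsetneq L$ and let $K$ act on the graded space $L/K$ by $\mathrm{ad}$; by the lemma each homogeneous element of $K$ acts nilpotently, so the inductive hypothesis applied to $K$ (whose common kernel in $L/K$ is a graded subspace) yields a homogeneous $y\in L\setminus K$ with $[K,y]\subseteq K$. If $y$ is even then $[y,y]=0$ and $K+\bC y$ is a graded subalgebra strictly containing $K$, hence equals $L$ by maximality. If $y$ is odd, set $z=[y,y]\in L_{\bar{0}}$; the super Jacobi identity gives $[y,z]=0$ and $[K,z]\subseteq K$, so $K+\bC z$ is again a graded subalgebra. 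Were $z\notin K$ it would strictly contain $K$ and hence equal $L$; but $K+\bC z$ has the same odd part as $K$, contradicting $y\in L_{\bar{1}}\setminus K$. Thus $z\in K$, so $K+\bC y$ is a graded subalgebra and equals $L$. In both cases $K$ is an ideal with $L=K+\bC y$.

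To finish, the inductive hypothesis applied to $K$ acting on $V$ gives a nonzero subspace $W_0:=\{v\in V:Kv=0\}$. For $k\in K$ and $v\in W_0$ one computes $k(yv)=\pm\,y(kv)+[k,y]v=[k,y]v=0$ since $[k,y]\in K$, so $W_0$ is $y$-invariant; as $y$ is nilpotent there is $0\neq v_0\in W_0$ with $yv_0=0$, whence $Lv_0=(K+\bC y)v_0=0$, as required. I expect the main obstacle to be the odd case: unlike the classical argument, extending $K$ by a single annihilated vector need not yield a subalgebra because $[y,y]$ may be nonzero, and the parity bookkeeping—both in the super Engel lemma and in the step forcing $[y,y]\in K$—is precisely where the $\bZ_2$-graded structure must be used rather than merely invoking the ordinary Engel theorem on $L_{\bar{0}}$.
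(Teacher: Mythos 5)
Your proof is correct. Be aware, though, that the paper offers no argument to compare against: this lemma is quoted as a known result, Theorem 2.1 of Moons \cite{M}, and used as a black box (its one application is to produce a nonzero vector annihilated by $\fa_2$ in the classification of finite dimensional simple $(t-1)\bar{\fs}$-modules). So the honest comparison is citation versus self-contained argument. Your argument is the standard adaptation of the classical Engel induction to the $\bZ_2$-graded setting, and the two genuinely super-specific points are exactly the ones you flag, and both are handled correctly: (i) for odd nilpotent $x$, the cross terms in $(\lambda-rP)^2$ cancel because $\lambda P=-P\lambda$ and $rP=-Pr$, giving $(\mathrm{ad}\,x)^2=\lambda^2-r^2$ (equivalently $(\mathrm{ad}\,x)^2=\mathrm{ad}(x^2)$ with $x^2$ even and nilpotent), so $\mathrm{ad}\,x$ is again nilpotent; (ii) in the maximal-subalgebra step an odd $y$ may have $[y,y]\neq0$, which you repair by showing $z=[y,y]$ normalizes $K$ and then forcing $z\in K$ by comparing odd parts of $K+\bC z$ and $L$ --- that parity argument is the crux and it is airtight, after which $K+\bC y$ is a subalgebra and maximality gives $L=K+\bC y$ with $K$ an ideal. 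The points you leave implicit are all harmless: the base case $L=0$, the standing assumption $V\neq0$, and the gradedness of the common kernel of $K$ in $L/K$ (which you do note, and which is what entitles you to a homogeneous coset representative $y$). What the paper's citation buys is brevity; what your proof buys is independence from \cite{M}, at the cost of roughly a page.
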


\section{Cuspidal modules}
For $m\in\bZ\setminus\{0\}$, let
\begin{align*}
X_m&:=t^{-m}\cdot L_m+\frac{m}{2}t^{-m}\xi\cdot G_m-L_0,\\
Y_m&:=t^{-m}\cdot G_m-2t^{-m}\xi\cdot L_m-G_0+2\xi\cdot L_0\in \overline{U}.
\end{align*}
And let $\cT$ be the subspace of $\overline{U}$ spanned by $\{X_m,Y_m\,|\, m\in\bZ\setminus\{0\}\}$. Then we have
\begin{prop}
\begin{enumerate}
\item $[\cT,G_0]=[\cT,A]=0$.
\item $\cT$ is a Lie subsuperalgebra of $\overline{U}$. Moreover, $\cT$ is isomorphic to the Lie superalgebra $(t-1)\bar{\fs}$.
\end{enumerate}
\end{prop}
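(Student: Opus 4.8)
The plan is to prove (1) first, since the elementary commutators it produces are reused throughout (2). For $[\cT,A]=0$ I would observe that for a fixed $x\in\{X_m,Y_m\}$ the set $\{a\in A:[x,a]=0\}$ is a subalgebra of $A$: this is immediate from the super-Leibniz rule $[x,ab]=[x,a]b+(-1)^{|x||a|}a[x,b]$ in the associative superalgebra $\overline U$. Hence it suffices to test the algebra generators $t,t^{-1},\xi$, and each test is a one-line computation from the explicit $\circ$-action together with $\xi^2=0$ and $t^{-m}t^{m}=1$; for instance
\[
[X_m,t]=t^{-m}(L_m\circ t)+\tfrac m2 t^{-m}\xi(G_m\circ t)-(L_0\circ t)=t+\tfrac m2 t\xi^2-t=0,
\]
and likewise $[X_m,\xi]=0$ (then $t^{-1}$ is automatic), with the same for $Y_m$. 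For $[\cT,G_0]=0$ I would expand $[X_m,G_0]$ and $[Y_m,G_0]$ by super-Leibniz, using $[L_m,G_0]=-\tfrac m2 G_m$, $[G_m,G_0]=-2L_m$ (here $C=0$ on $\bar{\fs}$), and the basic commutators $[t^{-m},G_0]=mt^{-m}\xi$, $[\xi,G_0]=-1$; the resulting terms cancel in pairs.

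For (2) the core is to compute the three families of brackets in $\overline U$ and read off closure. Writing $X_m=t^{-m}L_m+\tfrac m2 t^{-m}\xi G_m-L_0$ and the analogous splitting of $Y_m$, I would expand by super-Leibniz and collect, repeatedly using $\xi^2=0$ (which kills every product carrying two $\xi$'s) and $C=0$. With the convention $X_0=Y_0=0$, I expect
\begin{align*}
[X_m,X_n]&=(n-m)X_{m+n}+mX_m-nX_n,\\
[X_m,Y_n]&=(n-\tfrac m2)Y_{m+n}+\tfrac m2 Y_m-nY_n,\\
[Y_m,Y_n]&=-2X_{m+n}+2X_m+2X_n,
\end{align*}
all of which lie in $\cT$, so $\cT$ is a Lie subsuperalgebra (the three formulas also respect the $\bZ_2$-grading even$/$odd $=$ span$\{X_m\}/$span$\{Y_m\}$).

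For the isomorphism I would define $\Phi:\cT\to(t-1)\bar{\fs}$ by $\Phi(X_m)=L_m-L_0=(t^m-1)L_0$ and $\Phi(Y_m)=G_m-G_0=(t^m-1)G_0$ for $m\ne 0$ (using the $A$-action \eqref{A-action}); since $t^m-1\in(t-1)\bC[t^{\pm1}]$ for $m\ne0$, a direct check shows these images form a basis of $(t-1)\bar{\fs}$, so $\Phi$ is a linear isomorphism carrying $X_0,Y_0$ to $0$. It then remains to compute the brackets of $L_m-L_0$ and $G_m-G_0$ directly in $\bar{\fs}$; for example $[L_m-L_0,L_n-L_0]=(n-m)L_{m+n}+mL_m-nL_n$, which rewritten in the basis $\{L_k-L_0\}$ equals $(n-m)(L_{m+n}-L_0)+m(L_m-L_0)-n(L_n-L_0)$, matching the formula for $[X_m,X_n]$. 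The same reduction for $[G_m-G_0,G_n-G_0]$ and $[L_m-L_0,G_n-G_0]$ reproduces the other two structure constants, so $\Phi$ is an isomorphism of Lie superalgebras.

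The main obstacle is a parity subtlety in the bracket computations of (2). Although $X_m$ is even and $Y_m$ is odd, each is a sum of homogeneous products of \emph{mixed} parity -- in particular the odd summand $t^{-m}\xi G_m$ sits inside the even element $X_m$. Consequently, when expanding the even--even bracket $[X_m,X_n]$ or the even--odd bracket $[X_m,Y_n]$ (which are ordinary commutators $uv-vu$), the contribution of an odd-component $\times$ odd-component pair must be taken as the \emph{ordinary} commutator, not the super-anticommutator $uv+vu$. Getting this wrong breaks antisymmetry: e.g.\ the pair $t^{-m}\xi G_m,\,t^{-n}\xi G_n$ contributes the antisymmetric combination $t^{-m}\xi G_m-t^{-n}\xi G_n$ rather than its symmetric counterpart, and it is exactly this correction that forces the stray $L_0$-terms to cancel and produces the clean formulas above. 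By contrast $[Y_m,Y_n]$ is a genuine super-anticommutator of odd elements, so there every component pair is handled by the super-bracket and no such care is needed. I expect this bookkeeping, rather than any single commutator, to be the delicate part.
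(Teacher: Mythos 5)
Your computational plan and results coincide with the paper's own proof: part (1) by direct commutators in $\overline{U}$ (your reduction to the generators $t,t^{-1},\xi$ of $A$ via super-Leibniz is a mild streamlining of the paper's check against all $t^n$ and $\xi$), part (2) by computing the three families of brackets, and your structure constants
$[X_m,X_n]=(n-m)X_{m+n}+mX_m-nX_n$, $[X_m,Y_n]=(n-\tfrac{m}{2})Y_{m+n}+\tfrac{m}{2}Y_m-nY_n$, $[Y_m,Y_n]=2(X_m+X_n-X_{m+n})$
agree exactly with the paper's, as does the isomorphism $X_m\mapsto L_m-L_0$, $Y_m\mapsto G_m-G_0$; your explicit verification of the brackets on the $(t-1)\bar{\fs}$ side is a step the paper leaves implicit, and is a worthwhile addition.

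However, the closing paragraph--which you single out as the delicate point--rests on a parity error. In an associative superalgebra the parity of a product is the \emph{sum} of the parities of its factors, so $t^{-m}\xi\cdot G_m$ is \emph{even} (odd times odd), not odd: every summand of $X_m$ (namely $t^{-m}L_m$, $\tfrac{m}{2}t^{-m}\xi G_m$, $L_0$) is even, and every summand of $Y_m$ (namely $t^{-m}G_m$, $t^{-m}\xi L_m$, $G_0$, $\xi L_0$) is odd. There is no ``mixed parity'' phenomenon; indeed a homogeneous element can only decompose into homogeneous summands of its own parity (summands of the wrong parity would have to cancel). Consequently the standard bilinear expansion of the superbracket--ordinary commutator whenever one factor is even, super-anticommutator only for odd--odd pairs--already yields precisely the commutators you use for the pairs $t^{-m}\xi G_m,\,t^{-n}\xi G_n$; no ad hoc ``correction rule'' is needed, and the rule you formulate (treat odd-component pairs inside an even--even bracket by the ordinary commutator) is not a valid general principle--it merely coincides with the correct one here because those components are in fact even. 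So your computations are right, but for the reason your paragraph rejects rather than the reason it gives; as written, that paragraph would propagate a wrong sign convention into any setting where you trusted it.
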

\begin{proof}
The first statement follows from
\begin{align*}
[G_0,X_m]&=[G_0,t^{-m}]\cdot L_m+t^{-m}\cdot[G_0,L_m]+\frac{m}{2}([G_0,t^{-m}\xi]\cdot G_m-t^{-m}\xi\cdot[G_0,G_m])\\
&=-mt^{-m}\xi\cdot L_m+\frac{m}{2}t^{-m}\cdot G_m+\frac{m}{2}(-t^{-m}\cdot G_m+2t^{-m}\xi\cdot L_m)\\
&=0,\\
\\
[G_0,Y_m]&=[G_0,t^{-m}]\cdot G_m+t^{-m}[G_0,G_m]-2([G_0,t^{-m}\xi]\cdot L_m-t^{-m}\xi\cdot[G_0,L_m])-[G_0,G_0]+2[G_0,\xi]\cdot L_0\\
&=-mt^{-m}\xi\cdot G_m-2t^{-m}\cdot L_m-2(-t^{-m}\cdot L_m-\frac{m}{2}t^{-m}\xi\cdot G_m)+2L_0-2L_0\\
&=0,\\
\\
[t^n,X_m]&=t^{-m}[t^n,L_m]+\frac{m}{2}t^{-m}\xi[t^n,G_m]+[L_0,t^n]=-nt^n+nt^n=0,\\
\\
[t^n,Y_m]&=t^{-m}[t^n,G_m]-2t^{-m}\xi[t^n,L_m]-[t^n,G_0]+2\xi[t^n,L_0]=-nt^n\xi+2nt^n\xi+nt^n\xi-2nt^n\xi=0,\\
\\
[X_m,\xi]&=t^{-m}[L_m,\xi]+\frac{m}{2}t^{-m}\xi[G_m,\xi]-[L_0,\xi]=\frac{m}{2}t^{-m}\xi-\frac{m}{2}t^{-m}\xi=0,\\
\\
[Y_m,\xi]&=t^{-m}[G_m,\xi]-2t^{-m}\xi[L_m,\xi]-[G_0,\xi]+2\xi[L_0,\xi]=-1+1=0.
\end{align*}
And the second statement follows from
\begin{align*}
[X_m,X_n]=&[t^{-m}\cdot L_m+\frac{m}{2}t^{-m}\xi\cdot G_m-L_0,t^{-n}\cdot L_n+\frac{n}{2}t^{-n}\xi\cdot G_n-L_0]\\
=&t^{-m}[L_m,t^{-n}]\cdot L_n-t^{-n}[L_n,t^{-m}]\cdot L_m+t^{-m-n}\cdot[L_m,L_n]\\
&+\frac{n}{2}\big(t^{-m}[L_m,t^{-n}\xi]\cdot G_n-t^{-n}\xi[G_n,t^{-m}]\cdot L_m+t^{-m-n}\xi\cdot[L_m,G_n]\big)\\
&-t^{-m}\cdot[L_m,L_0]+[L_0,t^{-m}]\cdot L_m\\
&+\frac{m}{2}\big(t^{-m}\xi[G_m,t^{-n}]\cdot L_n-t^{-n}[L_n,t^{-m}\xi]\cdot G_m+t^{-m-n}\xi\cdot[G_m,L_n]\big)\\
&+\frac{mn}{4}\big(t^{-m}\xi\cdot[G_m,t^{-n}\xi]\cdot G_n-t^{-n}\xi[G_n,t^{-m}\xi]\cdot G_m\big)-[L_0,t^{-n}]\cdot L_n\\
&-t^{-n}\cdot[L_0,L_n]-\frac{n}{2}\big([L_0,t^{-n}\xi]\cdot G_n+t^{-n}\xi\cdot[L_0,G_n]\big)\\
=&-nt^{-n}\cdot L_n+mt^{-m}\cdot L_m+(n-m)t^{-m-n}\cdot L_{m+n}\\
&+\frac{n}{2}\big((\frac{m}{2}-n)t^{-n}\xi\cdot G_n+(n-\frac{m}{2})t^{-m-n}\xi\cdot G_{m+n}\big)\\
&+\frac{m}{2}\big((m-\frac{n}{2})t^{-m}\xi\cdot G_m-(m-\frac{n}{2})t^{-m-n}\xi\cdot G_{m+n}\big)\\
&+\frac{mn}{4}(-t^{-n}\xi\cdot G_n+t^{-m}\xi\cdot G_m)\\
=&-nX_n+mX_m+(n-m)X_{m+n},\\
[X_m,Y_n]=&[t^{-m}\cdot L_m+\frac{m}{2}t^{-m}\xi\cdot G_m-L_0,t^{-n}\cdot G_n-2t^{-n}\xi\cdot L_n-G_0+2\xi\cdot L_0]\\
=&t^{-m}[L_m,t^{-n}]\cdot G_n-t^{-n}[G_n,t^{-m}]\cdot L_m+t^{-m-n}\cdot[L_m,G_n]\\
&-2\big(t^{-m}[L_m,t^{-n}\xi]\cdot L_n-t^{-n}\xi[L_n,t^{-m}]\cdot L_m+t^{-m-n}\xi\cdot[L_m,L_n]\big)-[t^{-m},G_0]\cdot L_m\\
&-t^{-m}\cdot[L_m.G_0]+2\big(t^{-m}[L_m,\xi]\cdot L_0-\xi[L_0,t^{-m}]\cdot L_m+t^{-n}\xi\cdot[L_m,L_0]\big)\\
&+\frac{m}{2}\big(t^{-m}\xi[G_m,t^{-n}]\cdot G_n-t^{-n}[G_n,t^{-m}\xi]\cdot G_m+t^{-m-n}\xi\cdot[G_m,G_n]\big)\\
&-m\big(t^{-m}\xi[G_m,t^{-n}\xi]\cdot L_n-t^{-n}\xi[L_n,t^{-m}\xi]\cdot G_m\big)-\frac{m}{2}\big(t^{-m}\xi\cdot[G_m,G_0]-[G_0,t^{-m}\xi]\cdot G_m\big)\\
&+m\big(t^{-m}\xi[G_m,\xi]\cdot L_0-\xi[L_0,t^{-m}\xi]\cdot G_m\big)-[L_0,t^{-n}]\cdot G_n\\
&-t^{-n}\cdot[L_0,G_n]+2[L_0,t^{-n}\xi]\cdot L_n+2t^{-n}\xi\cdot[L_0,L_n]\\
=&-nt^{-n}\cdot G_n+mt^{-m}\xi\cdot L_m+(n-\frac{m}{2})t^{-m-n}\cdot G_{m+n}\\
&-2\big((\frac{m}{2}-n)t^{-n}\xi\cdot L_n+mt^{-m}\xi\cdot L_m+(n-m)t^{-m-n}\xi\cdot L_{m+n}\big)-mt^{-m}\xi\cdot L_m\\
&+\frac{m}{2}t^{-m}\cdot G_m+2(\frac{m}{2}\xi\cdot L_0+mt^{-m}\xi\cdot L_m-mt^{-m}\xi\cdot L_m)\\
&+\frac{m}{2}(t^{-m}\cdot G_m-2t^{-m-n}\xi\cdot L_{m+n})+mt^{-n}\xi\cdot L_n-\frac{m}{2}(-2t^{-m}\xi\cdot L_m+t^{-m}\cdot G_m)\\
&-m\xi\cdot L_0+nt^{-n}\cdot G_n-nt^{-n}\cdot G_n-2nt^{-n}\xi\cdot L_n+2nt^{-n}\xi\cdot L_n\\
=&-nY_n+\frac{m}{2}Y_m+(n-\frac{m}{2})Y_{m+n},\\
&[t^{-m}\cdot G_m-2t^{-m}\xi\cdot L_m,t^{-n}\cdot G_n-2t^{-n}\xi\cdot L_n]\\
=&t^{-m}[G_m,t^{-n}]\cdot G_n+t^{-n}[G_n,t^{-m}]\cdot G_m+t^{-m-n}\cdot[G_m,G_n]\\
&-2\big(t^{-m}[G_m,t^{-n}\xi]\cdot L_n+t^{-n}\xi[L_n,t^{-m}]\cdot G_m-t^{-m-n}\xi\cdot[G_m,L_n]\big)\\
&-2\big(t^{-m}\xi[L_m,t^{-n}]\cdot G_n+t^{-n}[G_n,t^{-m}\xi]\cdot L_m+t^{-m-n}\xi\cdot[L_m,G_n]\big)\\
&+4\big(t^{-m}\xi[L_m,t^{-n}\xi]\cdot L_n+t^{-n}\xi[L_n,t^{-n}\xi]\cdot L_m\big)\\
=&-nt^{-n}\xi\cdot G_n-mt^{-m}\xi\cdot G_m-2t^{-m-n}\cdot L_{m+n}\\
&-2\big(-t^{-n}\cdot L_n-mt^{-m}\xi\cdot G_m+(m-\frac{n}{2})t^{-m-n}\xi\cdot G_{m+n}\big)\\
&-2\big(-nt^{-n}\xi\cdot G_n-t^{-m}\xi\cdot L_m+(n-\frac{m}{2}t^{-m-n}\xi\cdot G_{m+n})\big)\\
=&2(X_n+X_m-X_{m+n}+L_0),\\
[Y_m,Y_n]=&2(X_n+X_m-X_{n+m}).
\end{align*}

Moreover, $\cT$ is isomorphic to $(t-1)\bar{\fs}$ via $\varphi: \cT\mapsto(t-1)\bar{\fs}$; $X_m\mapsto L_m-L_0, Y_m\mapsto G_m-G_0$.
\end{proof}


\begin{prop}
We have the associative superalgebra isomorphism $\overline{U}\cong \cK\otimes U(\cT)$.
\end{prop}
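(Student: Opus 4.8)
The plan is to realize both tensor factors as explicit commuting subalgebras of $\overline{U}$ and then check that their product exhausts $\overline{U}$ freely. The factor $U(\cT)$ is already at hand: by the previous proposition $\cT$ is a Lie subsuperalgebra of $\overline{U}$, so the inclusion induces an algebra homomorphism $\jmath\colon U(\cT)\to\overline{U}$. For the factor $\cK$ I would first write down the ``symbol'' homomorphism $\pi\colon\overline{U}\to\cK$ determined by the $\widetilde{\fs}$-action on $A$, namely $L_n\mapsto t^{n+1}\ptl_t+\frac n2 t^n\xi\ptl_\xi$, $G_n\mapsto t^{n+1}\xi\ptl_t-t^n\ptl_\xi$, and $a\mapsto a$ for $a\in A$; this descends to $\overline{U}$ because the defining relations of $I$ are respected, and it is surjective. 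The role of $\pi$ is to guess a splitting: since $G_0^2=-L_0$ in $\overline{U}$, the elements $t^{-1}L_0$ and $\xi L_0-G_0$ lie in the subalgebra $\cB$ generated by $A$ and $G_0$, and a short computation gives $\pi(t^{-1}L_0)=\ptl_t$ and $\pi(\xi L_0-G_0)=\ptl_\xi$. I would therefore define $\iota\colon\cK\to\overline{U}$ by $t^i\mapsto t^i$, $\xi\mapsto\xi$, $\ptl_t\mapsto t^{-1}L_0$, $\ptl_\xi\mapsto\xi L_0-G_0$.

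The first task is to check that $\iota$ is a well-defined homomorphism, i.e.\ that these images satisfy the defining relations of the Weyl superalgebra $\cK$. The mixed relations with $A$ (such as $[t^{-1}L_0,t^i]=it^{i-1}$ and $\{\xi L_0-G_0,\xi\}=1$) follow immediately from the $\bar{\fs}$-action on $A$; the only relations with genuine content are $[\iota(\ptl_t),\iota(\ptl_\xi)]=0$ and $\iota(\ptl_\xi)^2=0$, which I would verify directly in $\overline{U}$ using $[L_0,G_0]=0$, $[L_0,\xi]=0$ and $G_0^2=-L_0$. Since $\pi\circ\iota=\mathrm{id}_\cK$ on generators, $\iota$ is injective and $\cB:=\iota(\cK)=\langle A,G_0\rangle$ is a copy of $\cK$ inside $\overline{U}$. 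Crucially, the previous proposition already gives $[\cT,A]=[\cT,G_0]=0$, so $\cT$ supercommutes with every element of $\cB$; therefore the multiplication map
\[
\mu\colon\cK\otimes U(\cT)\longrightarrow\overline{U},\qquad k\otimes u\longmapsto\iota(k)\,\jmath(u),
\]
is a homomorphism of associative superalgebras.

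It remains to prove that $\mu$ is bijective. Surjectivity is a direct calculation: rewriting the definitions of $X_m,Y_m$ as $L_m+\frac m2\xi G_m=t^m(X_m+L_0)$ and $G_m-2\xi L_m=t^m(Y_m+G_0)-2t^m\xi L_0$ and solving this $2\times 2$ system (using $\xi^2=0$) expresses every $L_m,G_m$ inside $\iota(\cK)\jmath(U(\cT))$, while $A,L_0,G_0\in\cB$ already, so all generators of $\overline{U}$ lie in the image. For injectivity I would pass to associated graded algebras. Filtering $\cK$ by order of differential operators, $U(\cT)$ by PBW degree, and $\overline{U}$ by degree in $\bar{\fs}$ (the $A$-part having degree $0$), the map $\mu$ is filtered, and each associated graded algebra is supercommutative: $\mathrm{gr}\,\overline{U}\cong A\otimes S(\bar{\fs})$ and $\mathrm{gr}(\cK\otimes U(\cT))\cong A\otimes\bC[\ptl_t]\otimes\Lambda(\ptl_\xi)\otimes S((t-1)\bar{\fs})$. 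The induced map $\mathrm{gr}\,\mu$ is $A$-linear and, being an algebra map between free supercommutative $A$-algebras, is determined by its degree-one part; there it sends the basis $\{\ptl_t,\ptl_\xi\}\cup\{X_m,Y_m\}_{m\neq0}$ to elements whose leading coefficients in the mode basis $\{L_n,G_n\}$ are units of $A$ (e.g.\ $X_m\mapsto t^{-m}\bar{L}_m-\bar{L}_0+\cdots$, $Y_m\mapsto t^{-m}\bar{G}_m-\bar{G}_0+\cdots$). A triangularity argument then shows this degree-one map is an $A$-module isomorphism, whence $\mathrm{gr}\,\mu$, and therefore $\mu$, is bijective.

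I expect the injectivity step to be the main obstacle: verifying that $\mathrm{gr}\,\mu$ is an isomorphism amounts to checking that the symbols of $t^{-1}L_0$, $\xi L_0-G_0$, $X_m$, $Y_m$ form a new $A$-basis of $A\otimes\bar{\fs}$, and the bookkeeping of the $\xi$- and $L_0,G_0$-corrections, which mix the even and odd modes, has to be done with care. The well-definedness relations $[\iota(\ptl_t),\iota(\ptl_\xi)]=0$ and $\iota(\ptl_\xi)^2=0$ are the other computations that must be carried out in full.
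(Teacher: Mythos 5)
Your proposal is correct and follows essentially the same route as the paper: your splitting $\iota$ is exactly the inverse of the paper's map $\tau\colon A[G_0]\to\cK$ (both proofs realize $\cK$ as the subalgebra of $\overline{U}$ generated by $A$ and $G_0$, which supercommutes with $\cT$ by the previous proposition, and then study the multiplication map), and your inversion formulas for $L_m,G_m$ agree with the paper's surjectivity formulas. The only difference is in packaging the injectivity: you pass to associated graded algebras and check the degree-one part is an $A$-module isomorphism, while the paper deduces it from the PBW fact that $\overline{U}$ is a free $A[G_0]$-module on monomials in $\{t^{-m}\cdot L_m-L_0,\ t^{-m}\cdot G_m-G_0\mid m\neq0\}$; both arguments rest on the same underlying PBW basis of $\overline{U}$ over $A$.
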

\begin{proof}
Note that $U(\cT)$ is an associative subalgebra of $\overline{U}$ and the map $\tau: A[G_0]\to\cK$ with $\tau|_A=\mathrm{Id}_A, \tau(G_0)=\xi t\ptl_t-\ptl_\xi$ is a homomorphism of associative superalgebras. Define the map $\iota: A[G_0]\otimes U(\cT)\to\overline{U}$ by $\iota(t^i\xi^jG_0^k\otimes y)=t^i\xi^j\cdot G_0^k\cdot y+I, \forall i\in\bZ, j=0,1, k\in\bZ_+, y\in U(\cT)$. Then the restrictions of $\iota$ on $A[G_0]$ and $U(\cT)$ are well-defined homomorphisms of associative superalgebras. Also, note that $[\cT,A]=[\cT,G_0]=0$, $\iota$ is a well defined homomorphism of associative superalgebras. From
\begin{align*}
&\iota(t^m\otimes X_m-\frac{m}{2}t^m\xi\otimes Y_m+t^mL_0\otimes1-\frac{m}{2}t^m\xi G_0\otimes1)=L_m,\\
&\iota(t^m\otimes Y_m+2t^m\xi\otimes X_m+t^mG_0\otimes 1)=G_m,
\end{align*}
we can see that $\iota$ is surjective.

By PBW theorem we know that $\overline{U}$ has a basis consisting monomials in variables $\{L_m,G_m\,|\, m\in\bZ\setminus\{0\}\}$ over $A[G_0]$. Therefore $\overline{U}$ has an $A[G_0]$-basis consisting monomials in the variables $\{t^{-m}\cdot L_m-L_0, t^{-m}\cdot G_m-G_0\,|\,m\in\bZ\setminus\{0\}\}$. So $\iota$ is injective and hence an isomorphism.
\end{proof}

For any $(t-1)\bar{\fs}$-module $V$, we have the $A\bar{\fs}$-module $\Gamma(\lambda,V)=(A(\lambda)\otimes V)^{\varphi_1}$, where $\varphi_1: \overline{U}\xrightarrow{\iota^{-1}}\cK\otimes U(\cT)\xrightarrow{1\otimes\varphi}\cK\otimes U((t-1)\bar{\fs})$. More precisely, $\Gamma(\lambda,V)=A\otimes V$ with actions
\begin{align*}
t^i\xi^r.(y\otimes u):=&t^i\xi^ry\otimes u,\\
L_m.(y\otimes u):=&t^my\otimes(L_m-L_0).u-(-1)^{|y|}\frac{m}{2}t^m\xi y\otimes(G_m-G_0).u\\
&+t^m(\lambda y+t\ptl_t(y))\otimes u+\frac{m}{2}t^m\xi\ptl_\xi(y)\otimes u,\\
G_m.(y\otimes u):=&(-1)^{|y|}t^my\otimes(G_m-G_0).u+2t^m\xi y\otimes(L_m-L_0).u\\
&+t^m\xi (\lambda y+t\ptl_t(y))\otimes u-t^m\ptl_\xi(y)\otimes u.
\end{align*}

\begin{lemm}
\begin{enumerate}
\item For any $\lambda\in\bC$ and any simple $(t-1)\bar{\fs}$-module $V$, $\Gamma(\lambda,V)$ is a simple weight $A\bar{\fs}$-module.
\item Any simple weight $A\bar{\fs}$-module $M$ is isomorphic to some $\Gamma(\lambda,V)$ for some $\lambda\in\mathrm{Supp}(M)$ and some simple $(t-1)\bar{\fs}$-module $V$.
\end{enumerate}
\end{lemm}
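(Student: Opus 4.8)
The plan is to deduce both statements from the algebra isomorphism $\overline{U}\cong\cK\otimes U(\cT)$ together with $\cT\cong(t-1)\bar{\fs}$, so that every assertion about simple weight $A\bar{\fs}$-modules (equivalently simple $\overline{U}$-modules) becomes an assertion about tensor modules over $\cK\otimes U(\cT)$ to which Lemma \ref{tensor} applies. Two standing facts drive the argument: $\cK=A[\ptl_t,\ptl_\xi]$ has the countable basis $\{t^i\xi^j\ptl_t^k\ptl_\xi^\ell\}$, and each $A(\lambda)$ is \emph{strictly} simple over $\cK$ (simple as an ungraded $\cK$-module), as in \cite{XL}. I will also use that under $\tau$ the element $L_0=-G_0^2$ is identified with $t\ptl_t$ and, since $\tau(G_0)=\xi t\ptl_t-\ptl_\xi$, the operator $\ptl_\xi\in\cK$ acts on any $\overline{U}$-module as $\xi L_0-G_0$.

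For (1), by construction $\Gamma(\lambda,V)=A(\lambda)\otimes V$ viewed as an $\overline{U}$-module through $\varphi_1$. Using the super-swap isomorphism $\cK\otimes U(\cT)\cong U(\cT)\otimes\cK$ I would regard it as the tensor module $V\otimes A(\lambda)$ over $U(\cT)\otimes\cK$, with the strictly simple factor on the right. As $V$ is a simple $U(\cT)$-module (equivalently a simple $(t-1)\bar{\fs}$-module via $\varphi$) and $\cK$ has a countable basis, Lemma \ref{tensor}(2)(c) gives that $V\otimes A(\lambda)$, hence $\Gamma(\lambda,V)$, is simple. That it is a weight module follows by setting $m=0$ in the displayed $L_m$-action: the $t^m$- and $\xi$-terms vanish, leaving $L_0.(y\otimes u)=(\lambda y+t\ptl_t(y))\otimes u$, so $L_0$ acts on $t^i\otimes u$ and $t^i\xi\otimes u$ by $\lambda+i$ and is diagonalizable with support $\lambda+\bZ$.

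For (2), I would restrict a simple weight module $M$ (a simple $\cK\otimes U(\cT)$-module) to the subalgebra $\cK$ and exhibit a strictly simple $\cK$-submodule so that Lemma \ref{tensor}(2)(d) applies. Pick a nonzero weight vector $v$, say $L_0v=t\ptl_t v=\mu v$. Since $\ptl_\xi=\xi L_0-G_0$ has weight $0$ and satisfies $\ptl_\xi^2=0$, the vector $w:=\ptl_\xi v$ when nonzero, and $w:=v$ otherwise, is a nonzero weight vector of weight $\mu$ with $\ptl_\xi w=0$. Then $w$ is killed by the generators $t\ptl_t-\mu$ and $\ptl_\xi$ of $I_\mu$, so $\bar 1\mapsto w$ defines a nonzero $\cK$-map $A(\mu)=\cK/I_\mu\to M$; strict simplicity of $A(\mu)$ forces injectivity, whence $\cK w\cong A(\mu)$ is a strictly simple $\cK$-submodule of $M$. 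Regarding $M$ as a $U(\cT)\otimes\cK$-module and taking $M'=A(\mu)$ in Lemma \ref{tensor}(2)(d) yields $M\cong V\otimes A(\mu)$ for some simple $U(\cT)$-module $V$; swapping the factors back identifies this with $A(\mu)\otimes V=\Gamma(\mu,V)$, with $\mu\in\mathrm{Supp}(M)$.

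The main obstacle is the heart of (2): producing the strictly simple $\cK$-submodule. Once one recognizes that $\ptl_\xi$ lies in $\cK$, has weight zero, and squares to zero, the vector $w$ with $\ptl_\xi w=0$ is immediate and the cyclic module $\cK w$ is pinned down to $A(\mu)$ purely by its defining relations; the remainder is routine bookkeeping with the super-swap and with parity (a parity mismatch is absorbed by replacing $V$ with $\Pi(V)$, which keeps us inside the family $\{\Gamma(\mu,V)\}$). The single external input I rely on is the strict simplicity of $A(\lambda)$ over $\cK$, which is exactly what makes both the injectivity step in (2) and the simplicity criterion of Lemma \ref{tensor}(2) available.
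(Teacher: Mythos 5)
Your proof is correct and takes essentially the same route as the paper: both reduce the statement through $\overline{U}\cong\cK\otimes U(\cT)$ to Lemma \ref{tensor}, and your vector $w$ with $\ptl_\xi w=0$ is exactly the paper's element $v'\in\bC[\ptl_\xi]v$, which generates a copy of $A(\lambda)$ or $\Pi(A(\lambda))$ to which Lemma \ref{tensor}(2)(d) is applied. Two cosmetic points: choose $v$ homogeneous (weight spaces are $\bZ_2$-graded, so this costs nothing, and it guarantees $\cK w$ is a graded submodule), and by Lemma \ref{tensor}(1) the parity correction is $\Pi(V^T)$ rather than $\Pi(V)$ --- immaterial for the conclusion, since $V^T$ is again a simple $(t-1)\bar{\fs}$-module.
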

\begin{proof}
The first statement follows from Lemma \ref{tensor} and Lemma \ref{Kmod}. For the second statement, let $M$ be any simple weight $A\bar{\fs}$-module with $\lambda\in\mathrm{Supp}(M)$. Then $M^{\varphi_1^{-1}}$ is a simple $\cK\otimes U((t-1)\bar{\fs})$-module. Fix a nonzero homogeneous element $v\in(M^{\varphi_1^{-1}})_\lambda$, then $\bC[\ptl_\xi]v$ is a finite dimensional supersubspace with $\ptl_\xi$ acting nilpotently. So there exists a nonzero element $v'$ in $\bC[\ptl_\xi]v$ with $I_\lambda v'=0$. Clearly,$\cK v'$ is isomorphic to $A(\lambda)$ or $\Pi(A(\lambda))$. Hence by Lemma \ref{tensor} and Lemma \ref{Kmod}, there exists a simple $U((t-1)\bar{\fs})$-module $N$ such that $M^{\varphi_1^{-1}}\cong A(\lambda)\otimes N$ or $M^{\varphi_1^{-1}}\cong\Pi(A(\lambda))\otimes N\cong A(\lambda)\otimes\Pi(N^T)$.
\end{proof}

Thus, to classify all simple weight $A\bar{\fs}$-modules, it suffices to classify all simple $(t-1)\bar{\fs}$-modules. In particular, to classify all simple cuspidal $A\bar{\fs}$-modules, it suffices to classify all finite dimensional simple $(t-1)\bar{\fs}$-modules.

\begin{lemm}
\begin{enumerate}
\item Let $V$ be any finite dimensional $(t-1)\bar{\fs}$-module. Then there exists $k\in\bN$ such that $\fa_kV=0$.
\item Let $V$ be any simple finite dimensional simple $(t-1)\bar{\fs}$-module. Then $\fa_2V=0$. In particular, $\dim V=1$.
\end{enumerate}
\end{lemm}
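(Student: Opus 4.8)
Write $\rho$ for the action of $\fa_1=(t-1)\bar{\fs}$ on $V$. The first goal is to kill the even part: to produce $k_1$ with $\rho(\fa_{k_1}^{\bar 0})=0$, where $\fa_k^{\bar 0}=\mathrm{span}\{(t-1)^jL_m: j\ge k\}$ spans the copy of $(t-1)\mathfrak{w}$ inside $\fa_1$. The tool is the ``Euler'' element $\mathcal{E}:=(t-1)L_{-1}\in\fa_1^{\bar 0}$. A direct computation from Lemma~\ref{rel-subalg} gives $[\mathcal{E},(t-1)^kL_m]=(k-1)(t-1)^kL_m+(m+1)(t-1)^{k+1}L_{m-1}$, so $\operatorname{ad}\mathcal{E}$ acts on the associated graded $\fa_k^{\bar 0}/\fa_{k+1}^{\bar 0}$ as the scalar $k-1$. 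Since $\rho(\mathcal{E})$ has only finitely many eigenvalues on the finite-dimensional $V$, the elements of $\fa_k^{\bar 0}$ shift $\rho(\mathcal{E})$-eigenvalues by $\ge k-1$ and must therefore annihilate $V$ once $k-1$ exceeds the spread of the spectrum of $\rho(\mathcal{E})$. This is the rank-one case of the finite-dimensionality (jet) result for vector-field Lie algebras, which I would quote from \cite{BF,XL} or reprove by this eigenvalue-boundedness argument.

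The new and clean step is to upgrade $\rho(\fa_{k_1}^{\bar 0})=0$ to $\rho(\fa_{k_1+1})=0$. Fix $i,j\in\bZ$ and apply $\rho$ to the bracket $[(t-1)^{k_1}L_i,(t-1)G_j]$, both factors lying in $\fa_1$. This is an (even)$\cdot$(odd) bracket with $\rho((t-1)^{k_1}L_i)=0$, so its image is $0$; but by Lemma~\ref{rel-subalg} the bracket equals $(j-\tfrac i2)(t-1)^{k_1+1}G_{i+j}+(1-\tfrac{k_1}{2})(t-1)^{k_1}G_{i+j+1}$, giving the operator identity $(j-\tfrac i2)\rho((t-1)^{k_1+1}G_{i+j})+(1-\tfrac{k_1}{2})\rho((t-1)^{k_1}G_{i+j+1})=0$. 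Holding $s:=i+j$ fixed and letting $i$ vary, the two operators are unchanged while the coefficient $j-\tfrac i2=s-\tfrac{3i}{2}$ of the first varies; subtracting the relations for two different values of $i$ eliminates the uncontrolled $(t-1)^{k_1}$-term and forces $\rho((t-1)^{k_1+1}G_s)=0$ for every $s$. Hence $\rho(\fa_{k_1+1}^{\bar 1})=0$, and together with $\fa_{k_1+1}^{\bar 0}\subseteq\fa_{k_1}^{\bar 0}$ this gives $\fa_{k_1+1}V=0$.

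\textbf{Plan for part (2).} By part (1) choose the minimal $k$ with $\fa_kV=0$ and suppose for contradiction that $k\ge 3$. Put $\fg:=\fa_1/\fa_k$ and $\mathfrak{m}:=\fa_{k-1}/\fa_k$. Since $[\fa_{k-1},\fa_{k-1}]\subseteq\fa_{2k-3}\subseteq\fa_k$ for $k\ge 3$ (the lowest-degree term coming from the even--odd bracket of Lemma~\ref{rel-subalg}), $\mathfrak{m}$ is an \emph{abelian} ideal of $\fg$. Moreover the $\fa_{k-1}$-component of $[(t-1)L_i,(t-1)^{k-1}L_j]$ is $(k-2)(t-1)^{k-1}L_{i+j}$, which is nonzero (as $k\ne 2$) and spans $\mathfrak{m}_{\bar 0}$ as $i+j$ runs over $\bZ$, so $[\fg_{\bar 0},\mathfrak{m}_{\bar 0}]=\mathfrak{m}_{\bar 0}$. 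Now $\mathfrak{m}_{\bar 0}$ is an abelian ideal of $\fg_{\bar 0}$, so it acts on $V$ with generalized weights; by the invariance of weights (the key lemma in the proof of Lie's Theorem) each such weight vanishes on $[\fg_{\bar 0},\mathfrak{m}_{\bar 0}]=\mathfrak{m}_{\bar 0}$ and is therefore $0$, i.e. $\mathfrak{m}_{\bar 0}$ acts nilpotently. Since $[\mathfrak{m}_{\bar 1},\mathfrak{m}_{\bar 1}]\subseteq\fa_{2k-2}\subseteq\fa_k$ acts as $0$, every odd element of $\mathfrak{m}$ squares to $0$, so all of $\mathfrak{m}$ acts nilpotently.

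Engel's Theorem (Lemma~\ref{Engel}) applied to $\mathfrak{m}$ then produces a nonzero $v$ with $\mathfrak{m}v=0$; the subspace $\{v\in V:\mathfrak{m}v=0\}$ is a nonzero submodule because $\mathfrak{m}$ is an ideal, so by simplicity $\mathfrak{m}V=0$, that is $\fa_{k-1}V=0$, contradicting the minimality of $k$. Hence $k\le 2$ and $\fa_2V=0$. Consequently $V$ is a simple finite-dimensional module over $\fa_1/\fa_2\cong L$ of Lemma~\ref{ideal}(2), which by the structural lemma for $L$ is one-dimensional, so $\dim V=1$. I expect the only real obstacle to be the termination of the eigenvalue argument in part (1): the recursion $[\mathcal{E},(t-1)^kL_m]$ relates degree $k$ to degree $k+1$ and does not close on its own, and one must genuinely use that the spectrum of $\rho(\mathcal{E})$ is bounded (equivalently, invoke the vector-field/jet result); once that even-part statement is in hand, the super-extension and the whole of part (2) are formal.
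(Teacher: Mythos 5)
Your proposal is correct, and it is worth separating the two parts, because part (1) is essentially the paper's argument while part (2) is genuinely different.

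In part (1) you and the paper share the same skeleton: the jet-type statement for the even subalgebra $(t-1)\mathfrak{w}$ is taken as a quoted input (the paper states it as known; you cite \cite{BF,XL}), and the work is in transferring it to the odd part. Two remarks. First, your optional ``Euler element'' reproof should be dropped in a final write-up: by Lemma \ref{rel-subalg} the correct bracket is $[(t-1)L_{-1},(t-1)^kL_m]=(k+m)(t-1)^{k+1}L_{m-1}+(k-1)(t-1)^kL_{m-1}$ (both terms carry $L_{m-1}$), and, as you yourself concede, the eigenvalue-shift argument does not close because of the uncontrolled $(t-1)^{k+1}$ tail; quoting the result, as the paper does, is the right move. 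Second, your hand transfer to the odd part, via the identity $(j-\tfrac i2)\rho((t-1)^{k_1+1}G_{i+j})+(1-\tfrac{k_1}{2})\rho((t-1)^{k_1}G_{i+j+1})=0$ and variation of $i$ with $i+j$ fixed, is correct and in fact slightly more robust than the paper's route. The paper instead invokes Lemma \ref{ideal}(3) (the ideal generated by $\{(t-1)^kL_m\}$ is $\fa_k$) together with the fact that $\mathrm{ann}(V)$ is an ideal; but Lemma \ref{ideal}(3) degenerates at $k=2$, where the coefficient $1-\tfrac k2$ vanishes and the ideal generated by $\{(t-1)^2L_m\}$ is only $\fa_{2,\bar0}\oplus\fa_{3,\bar1}$, not $\fa_2$ (the paper's conclusion survives because one may always enlarge $k$). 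Your version never needs that coefficient to be nonzero, since it outputs $\fa_{k_1+1}V=0$ rather than $\fa_{k_1}V=0$.

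In part (2) the routes diverge. The paper applies super-Engel (Lemma \ref{Engel}) once, directly to $\fa_2$: it asserts that $\fa_{2,\bar0}$ acts nilpotently on $V$ (an input from Virasoro/Lie theory on the solvable even quotient, not justified on the page), notes that odd elements of $\fa_2$ square into $\fa_{2,\bar0}$, and finishes with the ideal-plus-simplicity argument. You instead take $k\ge3$ minimal with $\fa_kV=0$ and apply Engel only to the top layer $\mathfrak{m}=\fa_{k-1}/\fa_k$, manufacturing the even nilpotency internally: $\mathfrak{m}_{\bar0}$ is an abelian ideal of $(\fa_1/\fa_k)_{\bar0}$ satisfying $[(\fa_1/\fa_k)_{\bar0},\mathfrak{m}_{\bar0}]=\mathfrak{m}_{\bar0}$ because the relevant coefficient $k-2$ is nonzero, so the invariance-of-weights lemma forces all generalized weights of $\mathfrak{m}_{\bar0}$ to vanish, while $[\mathfrak{m}_{\bar1},\mathfrak{m}_{\bar1}]\subseteq\fa_{2k-2}\subseteq\fa_k$ makes odd elements square to zero; Engel and simplicity then contradict minimality. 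Both arguments are valid and both end identically, via $\fa_1/\fa_2\cong L$ and the one-dimensionality lemma. What yours buys is self-containedness (after part (1), no further external nilpotency input is needed, only the two bracket coefficients $k-2$ and the parity of degrees); what the paper's buys is brevity, at the price of leaving its key nilpotency assertion to the reader.
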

\begin{proof}
\begin{enumerate}
\item Since $V$ is a finite dimensional $(t-1)\frak w$-module, there exists $k\in\bN$ such that $(t-1)^k\frak wV=0$. So the first statement follows from Lemma \ref{ideal}.
\item Consider the finite dimensional Lie superalgebra $\fg=\fa_1/\mathrm{ann}V$, then $V$ is a finite dimensional $\fg_{\bar{0}}$-module and $\fa_{2,\bar{0}}+\mathrm{ann}(V)$ acts nilpotently on $V$. Since $[x,x]\in\fa_{2,\bar{0}}$ for all $x\in\fa_{2,\bar{1}}$, every element in $\fa_{2,\bar{1}}+\mathrm{ann}(V)$ acts nilpotently on $V$. Hence, by Lemma \ref{Engel}, there is nonzero $v\in V$ annihilated by $\fa_2+\mathrm{ann}(V)$. And therefore $\fa_2V=0$, which means $V$ is a simple finite dimensional module for $\fa_1/\fa_2$.\qedhere
\end{enumerate}
\end{proof}

\begin{coro}\label{inter}
Any simple cuspidal $A\bar{\fs}$-module is isomorphic to some $\Gamma(\lambda,b)=A\otimes \bC u$ with $\lambda,b\in\bC$ defined as follows:
\begin{align*}
t^i\xi^r.(y\otimes u)&=t^i\xi^ry\otimes u,\\
L_m.(t^i\xi^r\otimes u)&=(\lambda+i+m(b+\frac{1}{2}\delta_{\bar{1},\bar{r}}))t^{m+i}\xi^r\otimes u,\\
G_m.(t^i\otimes u)&=(\lambda+i+2mb)t^{m+i}\xi\otimes u,\\
G_m.(t^i\xi\otimes u)&=-t^{m+i}\otimes u,
\end{align*}
where $i,m\in\bZ, r=0,1, y\in A$.
\end{coro}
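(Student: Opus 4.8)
The plan is to assemble the structural results already established so that the corollary reduces to a short computation. By the lemma describing simple weight $A\bar{\fs}$-modules, a simple cuspidal module $M$ is isomorphic to $\Gamma(\lambda,V)$ for some $\lambda\in\mathrm{Supp}(M)$ and some simple $(t-1)\bar{\fs}$-module $V$; cuspidality of $M=A\otimes V$ forces $V$ to be finite dimensional. By the last lemma of the section, such a $V$ satisfies $\fa_2V=0$ and $\dim V=1$, so $V=\bC u$ is a one dimensional module over $\fa_1/\fa_2$. By Lemma \ref{ideal}(2) together with the classification of simple finite dimensional modules for $L=\bC X+\bC Y$, there is $b\in\bC$ with $X.u=bu$ and $Y.u=0$ (a possible parity change being absorbed into the statement). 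I would take $V=\bC u$ to be the module defining $\Gamma(\lambda,b)$.

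Next I would make the identification $X=(t-1)L_0+\fa_2$ and $Y=(t-1)G_0+\fa_2$ explicit, so as to convert the abstract action of $X,Y$ into the action of the elements $L_m-L_0$ and $G_m-G_0$ that occur in the formula for $\Gamma(\lambda,V)$. Since $t^iL_n=L_{n+i}$ gives $(t-1)L_j=L_{j+1}-L_j$, and working modulo $\fa_2$ one has $(t-1)^2\bar{\fs}\equiv 0$, a telescoping argument yields $(t-1)L_{j+1}\equiv(t-1)L_j$ and $(t-1)G_{j+1}\equiv(t-1)G_j$ for all $j$, whence $L_m-L_0=\sum_{j}(t-1)L_j\equiv mX$ and $G_m-G_0\equiv mY$ in $\fa_1/\fa_2$. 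Consequently $(L_m-L_0).u=mb\,u$ and $(G_m-G_0).u=0$.

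Finally I would substitute these two scalars into the defining formulas for $L_m.(y\otimes u)$ and $G_m.(y\otimes u)$ in $\Gamma(\lambda,V)$, treating the cases $y=t^i$ and $y=t^i\xi$ separately and using $t\ptl_t(t^i\xi^r)=it^i\xi^r$, $\ptl_\xi(t^i)=0$, $\ptl_\xi(t^i\xi)=t^i$, and $\xi^2=0$. For $y=t^i$ the $(G_m-G_0)$ term and the $\ptl_\xi$ term drop out, giving $L_m.(t^i\otimes u)=(\lambda+i+mb)t^{m+i}\otimes u$ and $G_m.(t^i\otimes u)=(\lambda+i+2mb)t^{m+i}\xi\otimes u$; for $y=t^i\xi$ the terms containing $\xi t^i\xi$ vanish and the surviving $\frac{m}{2}t^m\xi\ptl_\xi$ contribution produces the extra $\frac{m}{2}$, giving $L_m.(t^i\xi\otimes u)=(\lambda+i+m(b+\tfrac12))t^{m+i}\xi\otimes u$ and $G_m.(t^i\xi\otimes u)=-t^{m+i}\otimes u$. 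These are exactly the four displayed formulas.

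The substantive content all sits in the earlier lemmas, so the only real care needed is the middle step: pinning down $X,Y$ as concrete elements of $(t-1)\bar{\fs}$ and verifying the congruences $L_m-L_0\equiv mX$ and $G_m-G_0\equiv mY$ modulo $\fa_2$. The factor $m$ appearing there is precisely what couples the parameter $b$ of $\Gamma(\lambda,b)$ to the weight shift in the final formulas, so it must be tracked exactly; once that bookkeeping is settled, the four cases are purely mechanical.
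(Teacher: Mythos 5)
Your proof is correct and follows essentially the same route the paper intends for this corollary: it assembles the lemma classifying simple weight $A\bar{\fs}$-modules as $\Gamma(\lambda,V)$, the observation that cuspidality forces $\dim V<\infty$, the lemma giving $\fa_2V=0$ and $\dim V=1$, and the classification of one-dimensional modules over $\fa_1/\fa_2$, then substitutes into the defining formulas for $\Gamma(\lambda,V)$. Your explicit identification $X=(t-1)L_0+\fa_2$, $Y=(t-1)G_0+\fa_2$ together with the congruences $L_m-L_0\equiv mX$ and $G_m-G_0\equiv mY \pmod{\fa_2}$ is exactly the bookkeeping the paper leaves implicit, and it correctly produces the scalars $mb$ and $0$ that yield the four displayed formulas.
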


Next we are going to define the $A$-cover $\widehat{M}$ of a cuspidal $\bar{\fs}$-module $M$. Consider $\bar{\fs}$ as the adjoint $\bar{\fs}$-module. Then the tensor product $\bar{\fs}$-module $\bar{\fs}\otimes M$ is an $A\bar{\fs}$-module by
\[
x\cdot(y\otimes b):=(xy)\otimes v, \forall x\in A, y\in\bar{\fs}, v\in M.
\]

Let $K(M)=\{\sum\limits_i x_i\otimes v_i\in\bar{\fs}\otimes M\,|\,\sum\limits_i (ax_i)v_i=0, \forall a\in A\}$. Then $K(M)$ is an $A\bar{\fs}$-submodule of $\bar{\fs}\otimes M$. And hence we have the $A\bar{\fs}$-module $\widehat{M}=(\bar{\fs}\otimes M)/K(M)$, called the \emph{cover} of $M$ when $\bar{\fs}M=M$, as in \cite{BF}.  Clearly, the linear map $\pi: \widehat{M}\to\bar{\fs}M; x\otimes v+K(M)\mapsto xv$ is an $\bar{\fs}$-module epimorphism.

Recall that in \cite{BF}, the authors show that every cuspidal $W$-module is annihilated by the operators $\Omega_{k,s}^{(m)}$ for $m$ large enough.

\begin{lemm}[{\cite[Corollary 3.7]{BF}}]\label{Omegaoper}
For every $\ell\in\bN$ there exists $m\in\bN$ such that for all $k, s\in\bZ$ the differentiators $\Omega_{k, s}^{(m)}=\sum\limits_{i=0}^m(-1)^i\binom{m}{i}L_{k-i}L_{s+i}$ annihilate every cuspidal $W$-module with a composition series of length $\ell$.
\end{lemm}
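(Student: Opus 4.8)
The plan is to read $\Omega_{k,s}^{(m)}$ as a finite-difference operator in the summation parameter. Setting $f(i):=L_{k-i}L_{s+i}$, the elementary identity $\sum_{i=0}^m(-1)^i\binom{m}{i}f(i)=(-1)^m\Delta^m f(0)$, where $\Delta g(i):=g(i+1)-g(i)$, identifies $\Omega_{k,s}^{(m)}$ with the $m$-th iterated difference of $f$. Fixing a weight $\mu$, the map $f(i)\colon M_\mu\to M_{\mu+k+s}$ is a linear operator depending on the integer $i$, so the Lemma becomes equivalent to the statement that, on a cuspidal module of composition length $\ell$, this operator-valued function of $i$ is \emph{polynomial of degree bounded by a function of $\ell$ alone}, uniformly in $k,s$ and $\mu$. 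Once such a bound $D(\ell)$ is available, any $m>D(\ell)$ annihilates $M$, since the $m$-th difference of a polynomial of degree $<m$ vanishes identically.

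First I would calibrate the baseline on the simple factors. Using the known description of simple cuspidal $W$-modules as modules of the intermediate series, $L_n$ acts on a weight basis $\{v_j\}$ by $L_n v_j=(j+\beta n+c)v_{n+j}$ for fixed scalars $\beta,c$, i.e.\ affinely in $n$ and in the weight index. Substituting, $f(i)v_j=p(k-i,\,s+i+j)\,p(s+i,\,j)\,v_{k+s+j}$ with $p$ affine in each slot, so $f(i)$ is a scalar polynomial in $i$ of degree at most $2$. Hence $\Omega_{k,s}^{(3)}$ already annihilates every simple cuspidal $W$-module, and along any composition series $0=M_0\subset M_1\subset\cdots\subset M_\ell=M$ we obtain $\Omega_{k,s}^{(3)}M_j\subseteq M_{j-1}$ for all $j$ and all $k,s$, because each $L_n$ preserves the filtration. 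This one-step-lowering property is the structural input that must now be upgraded to genuine annihilation.

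The main work, and the step I expect to be the real obstacle, is to promote the simple-factor degree $2$ to a global degree bound $D(\ell)$ for the full length-$\ell$ module. The difficulty is that with respect to the filtration the operators $L_{k-i}$ and $L_{s+i}$ are only \emph{upper triangular}, and their off-diagonal blocks are not governed by the intermediate-series formulas; a priori these connecting coefficients could make $f(i)$ a polynomial in $i$ of higher degree, and one must control how far the degree can grow across each extension — and, crucially, control it \emph{uniformly in $k$ and $s$}, so that a single $m$ serves all differentiators at once. I would carry this out by induction on $\ell$, using the Virasoro relations $[L_m,L_n]=(n-m)L_{m+n}$ together with the uniform multiplicity bound: the brackets impose recursions on the matrix coefficients of the $L_n$ viewed as functions of $n$, and on a module with weight multiplicities bounded by some $N$ these recursions force polynomiality with the degree increasing by only a fixed amount as one descends one filtration layer. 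This should yield a bound of the shape $D(\ell)\le 2\ell$, so that $m=2\ell+1$ suffices; the delicate point throughout is the uniformity in $k,s$, which is exactly what guarantees that the chosen $m$ is independent of the indices of the differentiator.
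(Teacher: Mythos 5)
First, a point of comparison: the paper itself gives no proof of this lemma at all --- it is imported verbatim as \cite[Corollary 3.7]{BF}, so the benchmark is Billig--Futorny's argument, which develops a calculus of bracket identities for the differentiators, proves annihilation on irreducible cuspidal modules, and then carries out a genuine induction on composition length. Your opening moves are sound and are indeed the natural entry point: reading $\Omega_{k,s}^{(m)}$ as the $m$-th finite difference of $i\mapsto L_{k-i}L_{s+i}$, checking on the intermediate series $V_{\alpha,\beta}$ (where $L_nv_j=(\alpha+j+n\beta)v_{j+n}$) that this function is a polynomial of degree at most $2$ in $i$, so that $\Omega^{(3)}_{k,s}$ kills every simple cuspidal $W$-module (note this step already invokes Mathieu's classification of simple cuspidal modules), and concluding that along any composition series $\Omega^{(3)}_{k,s}M_j\subseteq M_{j-1}$, hence that any product of $\ell$ differentiators annihilates $M$.

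The gap is exactly where you flag it, and it is not a loose end but the entire content of the cited theorem. To pass from ``each differentiator lowers the filtration by one step'' to ``a single differentiator of bounded order annihilates $M$,'' one must control the off-diagonal blocks: for an extension $0\to N\to M\to Q\to 0$, writing $L_n$ in block form with connecting maps $\phi_n\colon Q\to N$, one needs the matrix coefficients of $n\mapsto\phi_n$ to be polynomial in $n$ of universally bounded degree, uniformly in $k,s$ and the weight. Your proposal asserts this (``the recursions force polynomiality \dots increasing by only a fixed amount'') but supplies no argument, and the assertion is essentially a restatement of the goal: saying that $i\mapsto L_{k-i}L_{s+i}$ is polynomial of degree $<m$ uniformly in $k,s,\mu$ \emph{is} the statement $\Omega^{(m)}_{k,s}M=0$ for all $k,s$, translated into finite-difference language. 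The cocycle identity $(n-p)\phi_{n+p}=L_p^N\phi_n-\phi_nL_p^Q-L_n^N\phi_p+\phi_pL_p^Q$ does not by itself force polynomiality, and bounded weight multiplicities enter your sketch only as an incantation; making this work requires either the classification of extensions between intermediate-series modules (Martin--Piard) or the Billig--Futorny differentiator identities (e.g.\ $[\Omega^{(m)}_{k,s},L_p]$ is a combination of differentiators of orders $m$ and $m-1$, so high-order differentiators span an $\mathrm{ad}$-invariant family), together with an induction scheme that handles iterated, possibly non-split extensions. As written, what you have actually proved is only the strictly weaker nilpotency statement $\Omega^{(3)}_{a_1,b_1}\cdots\Omega^{(3)}_{a_\ell,b_\ell}M=0$; the lemma you were asked to prove does not follow from it without the missing polynomiality argument.
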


Let $M$ be a cuspidal $\bar{\fs}$-module. Then $M$ is a cuspidal $W$-module and hence there exists $m\in\bN$ such that $\Omega_{k,p}^{(m)}M=0, \forall k,p\in\bZ$. Therefore, $[\Omega_{k,p}^{(m)},G_j]M=0, \forall j,k,p\in\bZ, s\in S$. Thus, on $M$ we have
\begin{align*}
0=&[\Omega_{k,p-1}^{(m)},G_{j+1}]-2[\Omega_{k,p}^{(m)},G_j]+[\Omega_{k,p+1}^{(m)},G_{j-1}]-[\Omega_{k+1,p-1}^{(m)},G_{j}]\\
&+2[\Omega_{k+1,p}^{(m)},G_{j-1}]-[\Omega_{k+1,p+1}^{(m)},G_{j-2}]\\
=&[\sum\limits_{i=0}^m(-1)^i\binom{m}{i}L_{k-i}L_{p-1+i},G_{j+1}]-2[\sum\limits_{i=0}^m(-1)^i\binom{m}{i}L_{k-i}L_{p+i},G_j]\\
&+[\sum\limits_{i=0}^m(-1)^i\binom{m}{i}L_{k-i}L_{p+1+i},G_{j-1}]-[\sum\limits_{i=0}^m(-1)^i\binom{m}{i}L_{k+1-i}L_{p-1+i},G_{j}]\\
&+2[\sum\limits_{i=0}^m(-1)^i\binom{m}{i}L_{k+1-i}L_{p+i},G_{j-1}]-[\sum\limits_{i=0}^m(-1)^i\binom{m}{i}L_{k+1-i}L_{p+1+i},G_{j-2}]\\
=&\sum\limits_{i=0}^{m}(-1)^i\binom{m}{i}\Big((j+1-\frac{k-i}{2})G_{k-i+j+1}L_{p-1+i}+(j+1-\frac{p-1+i}{2})L_{k-i}G_{p+i+j}\\
&-2(j-\frac{k-i}{2})G_{k-i+j}L_{p+i}-2(j-\frac{p+i}{2})L_{k-i}G_{p+i+j}+(j-1-\frac{k-i}{2})G_{k-i+j-1}L_{p+i+1}\\
&+(j-1-\frac{p+i+1}{2})L_{k-i}G_{p+i+j}-(j-\frac{k-i+1}{2})G_{k-i+j+1}L_{p+i-1}\\
&-(j-\frac{p+i-1}{2})L_{k-i+1}G_{p+i+j-1}+2(j-1-\frac{k-i+1}{2})G_{k-i+j}L_{p+i}\\
&+2(j-1-\frac{p+i}{2})L_{k-i+1}G_{p+i+j-1}-(j-2-\frac{k-i+1}{2})G_{k-i+j-1}L_{p+i+1}\\
&-(j-2-\frac{p+i+1}{2})L_{k+1-i}G_{p+i+j-1}\Big)\\
=&\frac{3}{2}\sum\limits_{i=0}^{m}(-1)^i\binom{m}{i}(G_{k-i+j+1}L_{p+i-1}-2G_{k-i+j}L_{p+i}+G_{k-i+j-1}L_{p+i+1})\\
=&\frac{3}{2}\sum\limits_{i=0}^m(-1)^i\binom{m+2}{i}G_{k-i+j+1}L_{p+i-1}.
\end{align*}
That is, we have
\begin{lemm}\label{omega}
Let $M$ be a cuspidal $\bar{\fs}$-module. Then there exists $m\in\bN$ such that for all $j,p\in\bZ$ the operators $\overline{\Omega}_{j,p}^{(m)}=\sum\limits_{i=0}^m(-1)^i\binom{m}{i}G_{j-i}L_{p+i}$ annihilate $M$.
\end{lemm}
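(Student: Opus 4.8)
The plan is to bootstrap this odd (super) differentiator identity from its purely even Virasoro counterpart, Lemma~\ref{Omegaoper}, by commuting with the odd generators $G_j$. First I would note that a cuspidal $\bar{\fs}$-module $M$ is in particular a cuspidal $W$-module over the even Witt subalgebra $\frak w$, and since cuspidal modules have finite length, Lemma~\ref{Omegaoper} produces an $m\in\bN$ with $\Omega_{k,s}^{(m)}M=0$ for all $k,s\in\bZ$. Because this even operator annihilates all of $M$ while $G_j$ maps $M$ into $M$, every commutator annihilates $M$ as well: for $v\in M$,
\[
[\Omega_{k,p}^{(m)},G_j]\,v=\Omega_{k,p}^{(m)}(G_jv)-G_j(\Omega_{k,p}^{(m)}v)=0 .
\]

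The core of the argument is to convert these commutators into a single operator of pure $GL$-type. Expanding one commutator by means of $[L_a,G_b]=(b-\tfrac{a}{2})G_{a+b}$ yields
\begin{align*}
[\Omega_{k,p}^{(m)},G_j]=\sum_{i=0}^m(-1)^i\binom{m}{i}\Big((j-\tfrac{k-i}{2})G_{k-i+j}L_{p+i}+(j-\tfrac{p+i}{2})L_{k-i}G_{p+i+j}\Big),
\end{align*}
a mixture of $GL$- and $LG$-ordered monomials carrying the half-integer weight coefficients. I would then form the six-term $\bZ$-linear combination displayed above, which is a first difference in the index $k$ of a second-difference pattern in $(p,j)$ taken along the antidiagonal $p+j=\mathrm{const}$ (coefficients $1,-2,1$ and their negatives $-1,2,-1$). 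Each summand kills $M$, hence so does the combination; the design ensures that upon expansion every $LG$-term and every half-integer coefficient cancels, leaving only
\[
\tfrac{3}{2}\sum_{i=0}^m(-1)^i\binom{m}{i}\big(G_{k-i+j+1}L_{p+i-1}-2G_{k-i+j}L_{p+i}+G_{k-i+j-1}L_{p+i+1}\big).
\]

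To finish I would read the three-term bracket as a discrete second difference in the summation index: setting $a_i=G_{k-i+j+1}L_{p+i-1}$, the surviving sum is $\sum_i(-1)^i\binom{m}{i}(a_i-2a_{i+1}+a_{i+2})$, and reindexing gathers the coefficient of $a_i$ into $\binom{m}{i}+2\binom{m}{i-1}+\binom{m}{i-2}=\binom{m+2}{i}$ by two applications of Pascal's rule. The operator therefore equals $\tfrac{3}{2}\overline{\Omega}_{k+j+1,\,p-1}^{(m+2)}$ and annihilates $M$. As $k,j,p$ range over $\bZ$ the indices $k+j+1$ and $p-1$ are arbitrary, so $\overline{\Omega}_{a,b}^{(m+2)}M=0$ for all $a,b\in\bZ$, and replacing $m+2$ by $m$ gives the claim. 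The main obstacle here is organizational rather than conceptual: spotting the precise six-term combination that annihilates the unwanted $LG$-terms, and then tracking the numerous weight coefficients through the expansion so that everything collapses to the single constant $\tfrac{3}{2}$; the closing binomial identity is routine once the second-difference structure is recognized.
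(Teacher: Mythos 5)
Your proposal is correct and follows essentially the same route as the paper: commute $\Omega_{k,p}^{(m)}$ with $G_j$, form exactly the same six-term combination (second difference along $p+j=\mathrm{const}$ minus its shift in $k$), collapse to $\tfrac{3}{2}$ times the second difference $G_{k-i+j+1}L_{p+i-1}-2G_{k-i+j}L_{p+i}+G_{k-i+j-1}L_{p+i+1}$, and apply Pascal's rule twice to recognize $\tfrac{3}{2}\overline{\Omega}_{k+j+1,p-1}^{(m+2)}$. The only difference is that the paper writes out the full term-by-term cancellation that you assert by design, and your version even fixes a small typo in the paper (the final sum's upper limit should be $m+2$, not $m$).
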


\begin{lemm}\label{covercuspidal}
For any cuspidal $\bar{\fs}$-module $M$, $\widehat{M}$ is also cuspidal.
\end{lemm}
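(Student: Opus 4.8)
The plan is to reduce the assertion to a uniform bound on $\dim\widehat M_\mu$ and then to produce enough relations inside $K(M)$, coming from the differentiators of Lemmas \ref{Omegaoper} and \ref{omega}, to confine the relevant tensor-indices to a bounded window. First I would record that $\widehat M$ is a weight module with $\mathrm{Supp}(\widehat M)\subseteq\mathrm{Supp}(M)+\bZ$, and that $\bar{\fs}$ is free of rank one as an $A$-module on the generator $L_0$, since $t^iL_0=L_i$ and $t^i\xi L_0=\tfrac12 G_i$ by \eqref{A-action}. Hence $\bar{\fs}\otimes M\cong A\otimes M$ as $A$-modules, and the weight space $\widehat M_\mu$ is spanned by the classes $\overline{L_n\otimes v}$ and $\overline{G_n\otimes v}$ with $n\in\bZ$ and $v$ running through a basis of $M_{\mu-n}$. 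As $M$ is cuspidal, say $\dim M_\gamma\le N$ for all $\gamma$, it suffices to show that modulo $K(M)$ these classes are supported on finitely many indices $n$, uniformly in $\mu$.

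Next I would manufacture the defining relations. Fix $m\in\bN$ with $\Omega^{(m)}_{k,s}M=0$ (Lemma \ref{Omegaoper}) and, after enlarging $m$ if necessary, $\overline{\Omega}^{(m)}_{k,s}M=0$ (Lemma \ref{omega}), for all $k,s$. I claim that for every $n,s$ and every $v$ both
\[
\sum_{i=0}^m(-1)^i\binom{m}{i}L_{n-i}\otimes L_{s+i}v
\quad\text{and}\quad
\sum_{i=0}^m(-1)^i\binom{m}{i}G_{n-i}\otimes L_{s+i}v
\]
lie in $K(M)$. This is checked by pairing with $a=t^j$ and $a=t^j\xi$: for the first element these pairings give $\Omega^{(m)}_{n+j,s}v$ and $\tfrac12\overline{\Omega}^{(m)}_{n+j,s}v$, and for the second they give $\overline{\Omega}^{(m)}_{n+j,s}v$ and $0$ (since $\xi G_{n-i}=0$), all of which vanish. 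Passing to $\widehat M$, solving the resulting $(m+1)$-term relation for its $i=0$ term expresses $\overline{L_n\otimes L_sv}$ (resp. $\overline{G_n\otimes L_sv}$) through classes with first index in $\{n-1,\dots,n-m\}$, while solving for the $i=m$ term expresses the lowest index through higher ones.

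With these recurrences in hand I would run the bounding argument. Writing $V_n$ (resp. $V'_n$) for the image in $\widehat M_\mu$ of $\{\overline{L_n\otimes w}\}$ (resp. $\{\overline{G_n\otimes w}\}$), the relations give $V_n\subseteq V_{n-1}+\cdots+V_{n-m}$ and $V_n\subseteq V_{n+1}+\cdots+V_{n+m}$ for all $n$, and likewise for $V'_n$, provided the second tensor factor $w$ lies in $\sum_s L_sM$. A two-sided induction on $n$ starting from any block $W$ of $m$ consecutive indices then yields $V_n,V'_n\subseteq\sum_{j\in W}(V_j+V'_j)$ for every $n$; since $\dim V_j,\dim V'_j\le\dim M_{\mu-j}\le N$, this gives $\dim\widehat M_\mu\le 2mN$, independent of $\mu$, and so $\widehat M$ is cuspidal.

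The step I expect to be the main obstacle is guaranteeing the coverage hypothesis $\sum_s L_sM=M$, which is what lets the recurrences act on every second tensor factor rather than only on vectors of the form $L_sv$. Since $\sum_s L_sM$ is merely a Virasoro-submodule of $M$ and need not be $\bar{\fs}$-stable, I would secure this by reducing to simple $M$ (uniform boundedness and the admissible $m$ depend only on the composition length $\ell$, so one may argue on composition factors) and disposing separately of the degenerate factors on which $\mathfrak{w}$ acts trivially; alternatively one can establish the companion differentiators $\sum_i(-1)^i\binom{m}{i}L_{n-i}G_{s+i}$ and $\sum_i(-1)^i\binom{m}{i}G_{n-i}G_{s+i}$ on $M$ by the same bracketing of $\Omega^{(m)}$ with $G$ that produced Lemma \ref{omega}, so that second factors in $\sum_s G_sM$ are covered as well and the full coverage $\sum_sL_sM+\sum_sG_sM=\bar{\fs}M=M$ applies.
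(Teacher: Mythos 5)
Your core machinery is the same as the paper's: the two families $\sum_{i=0}^m(-1)^i\binom{m}{i}L_{n-i}\otimes L_{s+i}v$ and $\sum_{i=0}^m(-1)^i\binom{m}{i}G_{n-i}\otimes L_{s+i}v$ lie in $K(M)$ by pairing with $t^j$ and $t^j\xi$ (Lemmas \ref{Omegaoper} and \ref{omega}), and your two-sided window recursion is exactly the paper's induction on $|q|$. But the step you yourself flag as the main obstacle --- coverage --- is a genuine gap, and the paper closes it with a one-line device you missed: coverage is a \emph{weight-space-by-weight-space} statement, not a module-theoretic one. Since $L_0$ acts on $M_{\alpha+q}$ as the scalar $\alpha+q$, every $u\in M_{\alpha+q}$ with $\alpha+q\neq0$ equals $L_0\bigl((\alpha+q)^{-1}u\bigr)$, so the recurrences (which rewrite tensors whose second factor has the form $L_sv$, and here one only needs $s=0$) apply to every generator $L_{p-q}\otimes u$, $G_{p-q}\otimes u$ except those whose second factor has weight $0$; after normalizing $\alpha=0$ when $\mathrm{Supp}(M)\subseteq\bZ$, that exceptional weight sits at $q=0$, inside the window $|q|\le m/2$, and never needs to be rewritten. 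This works for an arbitrary cuspidal $M$, with no simplicity assumption and no extra differentiators.

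Neither of your two proposed repairs is sound as written. For (a), reducing to composition factors requires a subadditivity of covers over extensions that you do not prove and that is not obvious: the cover is the image of the natural map $\bar{\fs}\otimes M\to\mathrm{Hom}(A,M)$, not an exact functor, and for $0\to M_1\to M\to M_2\to0$ the kernel of the induced surjection $\widehat{M}\to\widehat{M_2}$ consists of \emph{all} classes whose associated map $A\to M$ takes values in $M_1$, which may be strictly larger than the image of $\bar{\fs}\otimes M_1$; no bound on it is given. For (b), the companion differentiators do annihilate $M$ (the $LG$ family follows from $\overline{\Omega}^{(m)}$ by the reindexing $i\mapsto m-i$, and the $GG$ family by bracketing $\overline{\Omega}^{(m)}$ with $G_r$ and varying the free parameter), but your concluding identity $\bar{\fs}M=M$ is simply false for a general cuspidal module --- it fails whenever $M$ has a trivial quotient --- and the lemma is stated for arbitrary cuspidal $M$. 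That identity holds for simple nontrivial $M$, but restricting to that case returns you to the unjustified reduction (a). So the proposal is incomplete as it stands; inserting the paper's $L_0$ observation both fills the gap and makes your additional machinery unnecessary.
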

\begin{proof}
Since $\widehat{M}$ is an $A$-module, it suffices to show that one of its weight spaces is finite dimensional. Fix a weight $\alpha+p, p\in\bZ$ and let us prove that $\widehat{M}_{\alpha+p}=\mathrm{span}\{L_{p-k}\otimes M_{\alpha+k}, G_{p-k}\otimes M_{\alpha+k}\,|\,k\in\bZ\}$ is finite dimensional. Assume that $\alpha=0$ when $\alpha+\bZ=\bZ$. From Lemma \ref{Omegaoper} and Lemma \ref{omega}, there exists $m\in\bN$, such that $\sum\limits_{i=0}^m(-1)^i\binom{m}{i}L_{j-i}L_{p+i}v=\sum\limits_{i=0}^m(-1)^i\binom{m}{i}G_{j-i}L_{p+i}v=0, \forall j,p\in\bZ, v\in M$. Hence,
\begin{equation}\label{induction}
\sum\limits_{i=0}^m(-1)^i\binom{m}{i}L_{j-i}\otimes L_{p+i}v,\sum\limits_{i=0}^m(-1)^i\binom{m}{i}G_{j-i}\otimes L_{p+i}v\in K(M).
\end{equation}

We are going to prove by induction on $|q|$ for $q\in\bZ$ that for all $u\in M_{\alpha+q}$,
\[
L_{p-q}\otimes u,G_{p-q}\otimes u\in\sum\limits_{|k|\leq\frac{m}{2}}\Big(L_{p-k}\otimes M_{\alpha+k}+G_{p-k}\otimes M_{\alpha+k}\Big)+K(M).
\]
We only need to prove this claim for $|q|>\frac{m}{2}$, and we may assume that $q<-\frac{m}{2}$, the proof for $q>\frac{m}{2}$ is similar. Since $L_0$ acts on $M_{\alpha+q}$ with a nonzero scalar, we can write $u=L_0v$ for some $v\in M_{\alpha+q}$. Then by \eqref{induction} and induction hypothesis, we have
\begin{align*}
L_{p-q}\otimes L_0v&=\sum\limits_{i=0}^m(-1)^i\binom{m}{i}L_{p-q-i}\otimes L_iv-\sum\limits_{i=1}^m(-1)^i\binom{m}{i}L_{p-q-i}\otimes L_iv\\
&\in\sum\limits_{|k|\leq\frac{m}{2}}\Big(L_{p-k}\otimes M_{\alpha+k}+G_{p-k}\otimes M_{\alpha+k}\Big)+K(M),\\
G_{p-q}\otimes L_0v&=\sum\limits_{i=0}^m(-1)^i\binom{m}{i}G_{p-q-i}\otimes L_iv-\sum\limits_{i=1}^m(-1)^i\binom{m}{i}G_{p-q-i}\otimes L_iv\\
&\in\sum\limits_{|k|\leq\frac{m}{2}}\Big(L_{p-k}\otimes M_{\alpha+k}+G_{p-k}\otimes M_{\alpha+k}\Big)+K(M).\qedhere
\end{align*}
\end{proof}

Now we can classify all simple cuspidal $\bar{\fs}$-modules.
\begin{theo}\label{cuspidal}
Any nontrivial simple cuspidal $\bar{\fs}$-module is isomorphic to a simple quotient of $\Gamma(\lambda, b)$ for some $\lambda,b\in\bC$.
\end{theo}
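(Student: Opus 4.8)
The plan is to run the $A$-cover argument of Billig--Futorny \cite{BF}, now that all the supersymmetric ingredients are in place. Let $M$ be a nontrivial simple cuspidal $\bar{\fs}$-module. First I would check that $\bar{\fs}M=M$: the subspace $\bar{\fs}M$ is a $\bar{\fs}$-submodule of $M$, so by simplicity it is either $0$ or $M$; were it $0$, then every element of $\bar{\fs}$ would act by zero, forcing $M$ to be a one dimensional trivial module and contradicting nontriviality. Thus $\bar{\fs}M=M$, the cover $\widehat{M}=(\bar{\fs}\otimes M)/K(M)$ is defined, and there is a surjective $\bar{\fs}$-module map $\pi:\widehat{M}\to M$.

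Next, by Lemma \ref{covercuspidal}, $\widehat{M}$ is a cuspidal $A\bar{\fs}$-module. Since its weight multiplicities are uniformly bounded, $\widehat{M}$ has finite length as an $A\bar{\fs}$-module; I would fix a composition series
\[
0=W_0\subset W_1\subset\cdots\subset W_n=\widehat{M}
\]
whose factors $W_i/W_{i-1}$ are simple $A\bar{\fs}$-modules. Each factor, being a subquotient of the cuspidal module $\widehat{M}$, is again cuspidal, so by Corollary \ref{inter} each $W_i/W_{i-1}$ is isomorphic to some $\Gamma(\lambda_i,b_i)$.

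The heart of the argument is to locate the factor that sees $M$. Since every $W_i$ is an $A\bar{\fs}$-submodule it is in particular $\bar{\fs}$-stable, and $\pi$ is a $\bar{\fs}$-homomorphism, so each $\pi(W_i)$ is a $\bar{\fs}$-submodule of the simple module $M$, hence $0$ or $M$. As $\pi(W_0)=0$ and $\pi(W_n)=M$, there is a smallest index $i$ with $\pi(W_i)=M$; for this $i$ we have $\pi(W_{i-1})=0$, so $\pi$ factors through a surjective $\bar{\fs}$-module map $W_i/W_{i-1}\twoheadrightarrow M$. Composing with the isomorphism $W_i/W_{i-1}\cong\Gamma(\lambda_i,b_i)$ exhibits $M$ as a $\bar{\fs}$-module quotient of $\Gamma(\lambda_i,b_i)$, and since $M$ is simple it is in fact a simple quotient, as claimed.

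The main obstacle, and the point deserving care, is the passage from \emph{cuspidal} to \emph{finite length as an $A\bar{\fs}$-module}: one must verify that uniformly bounded weight multiplicities force a finite composition series and that passing to subquotients keeps the factors cuspidal weight modules, so that Corollary \ref{inter} genuinely applies. A secondary subtlety is the bookkeeping of parities --- the identification in Corollary \ref{inter} and the map $\pi$ need not respect the $\bZ_2$-grading --- but since the conclusion concerns only the underlying $\bar{\fs}$-module structure of the quotient, a parity change can be absorbed without altering the statement.
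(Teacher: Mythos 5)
Your proof is correct and follows essentially the same route as the paper: form the $A$-cover $\widehat{M}$, invoke Lemma \ref{covercuspidal} to get cuspidality, take a composition series of $A\bar{\fs}$-submodules, locate the first factor mapping onto $M$ via $\pi$, and conclude with Corollary \ref{inter}. Your extra remarks on finite length and parity flag points the paper glosses over (indeed the paper's ``$\widehat{M}^{(k-1)}=0$'' should read ``$\pi(\widehat{M}^{(k-1)})=0$'', which is exactly what you prove), but the argument itself is the same.
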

\begin{proof}
Let $M$ be any nontrivial simple cuspidal $\bar{\fs}$-module. Then $\bar{\fs}M=M$ and there is an epimorphism $\pi: \widehat{M}\to M$. From Lemma \ref{covercuspidal}, $\widehat{M}$ is cuspidal. Hence $\widehat{M}$ has a composition series of $A\bar{\fs}$-submodules:
\[
0=\widehat{M}^{(0)}\subset\widehat{M}^{(1)}\subset\cdots\subset\widehat{M}^{(s)}=\widehat{M}
\]
with $\widehat{M}^{(i)}/\widehat{M}^{(i-1)}$ being simple $A\bar{\fs}$-modules. Let $k$ be the minimal integer such that $\pi(\widehat{M}^{(k)})\neq0$. Then we have $\pi(\widehat{M}^{(k)})=M, \widehat{M}^{(k-1)}=0$ since $M$ is simple. So we have an $\bar{\fs}$-epimorphism from the simple $A\bar{\fs}$-module $\widehat{M}^{(k)}/\widehat{M}^{(k-1)}$ to $M$. Now theorem follows from Corollary \ref{inter}.
\end{proof}

\section{Main results}

In this section, we will classify all simple weight $\fs$-modules with finite dimensional weight spaces. First of all, from the representation theory of Virasoro algebra, we know that $C$ acts trivially on any simple cuspidal $\fs$-module, and hence the category of simple cuspidal $\fs$-modules is naturally equivalent to the category of simple cuspidal $\bar{\fs}$-modules. Thus, it remains to classify all all simple weight $\fs$-modules with finite dimensional weight spaces which is not cuspidal. From now on, we will assume $M$ is such an $\fs$-module. Let $\lambda\in\mathrm{supp}(M)$.

The following result is well-known
\begin{lemm}\label{weightupper}
Let $M$ be a weight module with finite dimensional weight spaces for the Virasoro algebra with $\mathrm{supp}(M)\subseteq\lambda+\bZ$. If for any $v\in M$, there exists $N(v)\in\bN$ such that $L_iv=0, \forall i\geq N(v)$, then $\mathrm{supp}(M)$ is upper bounded.
\end{lemm}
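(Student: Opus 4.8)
The plan is to argue by contradiction: assume $\mathrm{supp}(M)$ is not upper bounded and derive that some weight space is infinite dimensional. First I record two elementary reductions. Fix a nonzero weight vector $v\in M_\mu$ and set $N=N(v)$. Writing $\mathfrak{n}_+=\bigoplus_{i\ge1}\bC L_i$ and using the PBW basis of $U(\mathfrak{n}_+)$ by non-decreasing monomials, the rightmost, largest-index factor of any ordered monomial acts on $v$ first; since $L_iv=0$ for $i\ge N$, every monomial whose top index is $\ge N$ kills $v$. Hence $U(\mathfrak{n}_+)v=\mathrm{span}\{L_1^{a_1}\cdots L_{N-1}^{a_{N-1}}v\}$, a space with weights bounded below by $\mu$. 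Moreover, taking a basis of each finite dimensional weight space shows $L_iM_\mu=0$ for all $i$ larger than some $N_\mu$, so \emph{no} single vector can violate the hypothesis; the contradiction must therefore be dimensional, not obtained by exhibiting a $u$ with $L_iu\neq0$ for infinitely many $i$.

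The engine of the argument is the observation that $\ker(L_{-1})$ consists of Virasoro highest weight vectors. Indeed, if $L_{-1}v=0$, then from $[L_{-1},L_i]=(i+1)L_{i-1}$ together with $L_{-1}v=L_Nv=0$ we get $(N+1)L_{N-1}v=[L_{-1},L_N]v=0$, and a downward induction $L_{N-1}v=\cdots=L_1v=0$ shows $L_iv=0$ for all $i\ge1$. Thus a vector killed by $L_{-1}$ is annihilated by the whole positive part and generates a highest weight, hence upper bounded, submodule. Consequently the weights carrying highest weight vectors are exactly those $\nu$ with $\Omega_\nu:=\ker(L_{-1}|_{M_\nu})\neq0$, and $L_{-1}\colon M_\nu\to M_{\nu-1}$ is injective whenever $M_\nu$ contains no highest weight vector.

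This suggests splitting into two cases according to whether highest weight vectors occur at arbitrarily high weights. If they do not, say $\Omega_\nu=0$ for all $\nu>\nu^*$, then $L_{-1}$ is injective on every $M_\nu$ with $\nu>\nu^*$, so $\dim M_\nu$ is non-increasing in $\nu$ and stabilizes to some $d\ge1$ on an upper tail on which $L_{-1}$ acts bijectively; this is precisely the shape of a uniformly bounded, cuspidal-type module, and I would rule it out by showing that on such a tail the raising operators cannot eventually annihilate a fixed vector, where the differentiators of Lemma \ref{Omegaoper} and the cuspidal computations behind Lemma \ref{covercuspidal} should force a contradiction with the local annihilation hypothesis. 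If instead highest weight vectors occur at weights $\nu_1<\nu_2<\cdots\to\infty$, each $w_j$ generates a nontrivial, hence infinite dimensional, highest weight submodule reaching arbitrarily far down, and the finite dimensionality of a fixed low weight space should be overloaded by the contributions of infinitely many of these. I expect the main obstacle to be exactly this last quantitative step: in the tail case, making rigorous that a uniformly bounded tail with bijective $L_{-1}$ is incompatible with the hypothesis; and in the highest weight case, controlling the downward supports of the $U(\mathfrak{n}_-)w_j$ well enough to guarantee that one finite dimensional weight space receives infinitely many independent vectors.
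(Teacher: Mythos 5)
The paper itself offers no proof of this lemma: it is stated as ``well-known'' without argument or specific citation, so your proposal has to be judged on its own terms. Your ``engine'' is correct and is indeed the right key computation: since $[L_{-1},L_i]=(i+1)L_{i-1}$ has no central term for $i\ge 1$, a weight vector killed by $L_{-1}$ and by all sufficiently positive modes is killed by every $L_i$, $i\ge 1$. But the proof as written is genuinely incomplete: both terminal branches of your dichotomy end in declared expectations rather than arguments, as you yourself acknowledge. Worse, the route you sketch for the tail case would fail as stated: the subspace $\bigoplus_{\nu>\nu^{**}}M_\nu$ on which $L_{-1}$ is bijective with constant dimension is \emph{not} a submodule, while Lemma \ref{Omegaoper} and the computations behind Lemma \ref{covercuspidal} apply to actual cuspidal modules (the differentiator lemma moreover requires a finite composition series); manufacturing an honest cuspidal module out of the tail is essentially the whole difficulty, not a routine step. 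The highest-weight branch also needs real input you do not supply: the submodules $W_j$ generated by the $w_j$ may overlap massively, so to overload a fixed weight space one needs (i) that a nonzero highest weight module with highest weight $\nu\ne 0$ has support meeting every pair of consecutive weights below $\nu$, and (ii) that, the $\nu_j$ being strictly increasing, $w_j\notin W_1+\cdots+W_{j-1}$, so each new $W_j$ adds at least one dimension to $M_{\mu_0}\oplus M_{\mu_0-1}$.

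The good news is that a slight strengthening of your own engine closes the proof completely and makes the dichotomy (and the whole contradiction framing) unnecessary. Replace the hypothesis ``$L_{-1}u=0$'' by a relative one: suppose every vector of $M_{\nu}$ is killed by $L_i$ for $i\ge N\ge 1$, and let $u\in M_{\nu+1}$. Since $L_{-1}u\in M_\nu$, for all $i\ge N$ we have
\begin{equation*}
(i+1)L_{i-1}u\;=\;L_{-1}L_iu-L_iL_{-1}u\;=\;L_{-1}L_iu ,
\end{equation*}
and since $L_iu=0$ for $i\gg 0$ (the lemma's hypothesis applied to $u$), downward induction on $i$ gives $L_iu=0$ for all $i\ge N-1$. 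Now fix $\mu\in\mathrm{supp}(M)$ and use finite-dimensionality of $M_\mu$ to choose a uniform $N$ with $L_iM_\mu=0$ for $i\ge N$. Iterating the displayed step, every vector of $M_{\mu+k}$ is killed by all $L_i$ with $i\ge\max(1,N-k)$; in particular for $k\ge N-1$ every vector of $M_{\mu+k}$ is a highest weight vector. Then for $k\ge N$ and $u\in M_{\mu+k}$, both $L_1u=0$ and $L_1L_{-1}u=0$ (as $L_{-1}u\in M_{\mu+k-1}$), so the same identity at $i=1$ yields $2L_0u=L_{-1}L_1u-L_1L_{-1}u=0$, i.e.\ $(\mu+k)u=0$. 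Hence $M_{\mu+k}=0$ for every $k\ge N$ with $\mu+k\ne 0$, and since at most one such $k$ has $\mu+k=0$, $\mathrm{supp}(M)$ is upper bounded. This settles both of your cases at once, with no appeal to the cuspidal machinery and no structure theory of highest weight modules.
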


\begin{lemm}\label{appN=1}
Suppose $M$ is a simple weight $\fs$-module with finite dimensional weight spaces which is not cuspidal, then $M$ is a highest (or lowest) weight module.
\end{lemm}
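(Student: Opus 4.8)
The plan is to isolate inside $M$ the largest subspace on which all sufficiently positive (respectively negative) modes vanish, and to pit this against simplicity. Concretely, I would set
\[
M^{+}=\{v\in M\mid \text{there is }N\in\bN\text{ with }L_iv=G_iv=0\text{ for all }i\geq N\}
\]
and define $M^{-}$ the same way with $i\leq -N$. First I would verify that $M^{+}$ and $M^{-}$ are $\fs$-submodules. This is a routine bracket computation from the defining relations: for $v\in M^{+}$ and any $m$, the identity $[L_i,G_m]=(m-\tfrac{i}{2})G_{i+m}$ gives $L_iG_mv=0$ for $i\gg 0$, while $G_iG_m+G_mG_i=-2L_{i+m}+\delta_{i+m,0}\tfrac{4i^2-1}{12}C$ gives $G_iG_mv=0$ for $i\gg 0$, and the $L$-modes are handled identically. \textbf{Including the odd modes in the definition is exactly what makes the set $\fs$-stable}, since the subspace cut out by the Virasoro modes alone is only stable under the even part. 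As $M$ is simple, each of $M^{+}$, $M^{-}$ is either $0$ or all of $M$.

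Next I would dispatch the two cases in which one of these submodules is everything. Suppose $M^{+}=M$. Then every vector is killed by $L_i$ for $i$ large, so Lemma \ref{weightupper}, applied to $M$ as a module over the Virasoro subalgebra, shows $\mathrm{supp}(M)$ is bounded above. Picking a maximal weight $\mu$ and a nonzero $v\in M_\mu$, weight reasons force $L_iv\in M_{\mu+i}=0$ and $G_iv\in M_{\mu+i}=0$ for all $i>0$, so $v$ is a highest weight vector and $M=U(\fs)v$ is a highest weight module. The case $M^{-}=M$ is symmetric and gives a lowest weight module.

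The whole difficulty is thus concentrated in the remaining case $M^{+}=M^{-}=0$, which I must rule out by showing that such an $M$ is necessarily cuspidal, contradicting the hypothesis. This is the genuine obstacle, and it cannot be dodged: the cuspidal modules of Section 3 themselves satisfy $M^{+}=M^{-}=0$, so one cannot hope to exhibit a highest or lowest weight vector here — the only available conclusion is boundedness of the weight multiplicities. My plan is to descend to the even subalgebra: restricted to the Virasoro algebra, $M$ is a weight module with finite-dimensional weight spaces and, by the non-cuspidality assumption, unbounded multiplicities. I would then bring in Mathieu's trichotomy for Harish-Chandra modules over the Virasoro algebra — whose hard direction asserts that a simple weight Virasoro module with finite-dimensional weight spaces that is unbounded on both sides is cuspidal — and transfer the resulting one-sided boundedness of the Virasoro structure back through the relation $[L_i,G_m]=(m-\tfrac{i}{2})G_{i+m}$ to contradict $M^{+}=M^{-}=0$. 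The delicate step, and where the real work lies, is precisely this transfer, because a Virasoro subquotient carries no built-in control of the odd modes; the submodule bookkeeping of the first two paragraphs is mechanical by comparison.
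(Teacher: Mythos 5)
Your submodule device is sound and your two easy cases are handled correctly, but the proof has a genuine gap exactly where you yourself locate ``the real work'': the case $M^{+}=M^{-}=0$ is never ruled out, and the route you sketch for it cannot be made to work. Mathieu's trichotomy applies to \emph{simple} weight Virasoro modules, whereas $M$ restricted to its even part need not be simple, nor even of finite length, so there is no subquotient to which you can legitimately apply it and whose one-sided boundedness you could transfer back to $M$; you concede this difficulty but leave it unresolved, and since everything else in your argument is routine, the lemma is essentially unproved.

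What is missing is the observation that non-cuspidality feeds a pigeonhole argument showing \emph{directly} that $M^{+}\neq 0$ or $M^{-}\neq 0$; this is how the paper proceeds (its $M'=\{v\in M\mid \dim\fs^{+}v<\infty\}$, with $\fs^{+}=\sum_{n\in\bN}(\bC L_{n}+\bC G_{n})$, plays the role of your $M^{+}$). Because the weight multiplicities of $M$ are unbounded, there is $k\in\bZ$ with $\dim M_{\lambda-k}>2(\dim M_{\lambda}+\dim M_{\lambda+1})$; assume $k\in\bN$ (the case $k<0$ is symmetric and yields a lowest weight module). The linear map $M_{\lambda-k}\to M_{\lambda}\oplus M_{\lambda+1}\oplus M_{\lambda}\oplus M_{\lambda+1}$ given by $w\mapsto(L_{k}w,\,L_{k+1}w,\,G_{k}w,\,G_{k+1}w)$ then has nonzero kernel, so pick $0\neq w$ with $L_{k}w=L_{k+1}w=G_{k}w=G_{k+1}w=0$. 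The annihilator of $w$ in $\fs$ is a subalgebra, and since $[\fs_{i},\fs_{j}]=\fs_{i+j}$, the four elements $L_{k},L_{k+1},G_{k},G_{k+1}$ generate $\fs_{i}$ for every $i\geq k^{2}$; hence $L_{i}w=G_{i}w=0$ for all $i\geq k^{2}$, i.e.\ $w\in M^{+}$. Simplicity forces $M^{+}=M$, and your first case (Lemma \ref{weightupper} plus a maximal weight vector) finishes the proof. With this step inserted, the case $M^{+}=M^{-}=0$ simply cannot occur for a non-cuspidal simple module, and no appeal to Mathieu's theorem is needed.
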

\begin{proof}
Since $M$ is not cuspidal, then there is a $k\in\bZ$ such that $\dim M_{-k+\lambda}>2(\dim M_\lambda+\dim M_{\lambda+1})$. Without lost of generality, we may assume that $k\in\bN$. Then there exists a nonzero element $w\in M_{-k+\lambda}$ such that $L_kw=L_{k+1}w=G_kw=G_{k+1}w=0$. Therefore, $L_iw=G_iw=0$ for all $i\geq k^2$, since $[\fs_i,\fs_j]=\fs_{i+j}$.

It is easy to see that $M'=\{v\in M\,|\,\dim\fs^+v<\infty\}$ is a nonzero submodule of $M$, here $\fs^+=\sum\limits_{n\in\bN}(\bC L_n+\bC G_n)$. Hence $M=M'$. So, Lemma \ref{weightupper} tells us that $\mathrm{supp}(M)$ is upper bounded, that is $M$ is a highest weight module.
\end{proof}

Combining with Lemma \ref{appN=1} and Theorem \ref{cuspidal}, we can get the following result, which was given in \cite{S0} by much complicated calculations.

\begin{theo}\label{ts2}
Let $V$ be a simple ${\frak s}$-module with finite dimensional weight spaces. Then $V$ is a highest weight module, a lowest weight module, or a simple quotient of $\Gamma(\lambda, b)$ for some $\lambda,b\in\bC$ (which is called a module of the intermediate series).
\end{theo}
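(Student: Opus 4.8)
The plan is to assemble the two complementary classifications established above, splitting according to whether $V$ is cuspidal. Since $V$ is simple, its support lies in a single coset $\lambda+\bZ$, and by definition $V$ is either uniformly bounded (cuspidal) or it is not; these two cases are exhaustive and mutually exclusive. The essential point is that all of the genuine work has already been done: Theorem \ref{cuspidal} handles the cuspidal case and Lemma \ref{appN=1} handles the non-cuspidal case, so the argument for Theorem \ref{ts2} is an assembly rather than a fresh computation.

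First I would dispose of the non-cuspidal case. If $V$ is not cuspidal, then Lemma \ref{appN=1} applies directly and yields that $V$ is a highest weight module or a lowest weight module, which is exactly one of the asserted alternatives; nothing further is needed.

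Next I would treat the cuspidal case. Here I invoke the representation theory of the Virasoro subalgebra: on a simple cuspidal module the central element $C$ acts as the scalar $0$, so the $\fs$-action factors through $\bar{\fs}=\fs/\bC C$ and $V$ becomes a simple cuspidal $\bar{\fs}$-module. If $V$ is the trivial module, its generator is annihilated by $\fs^+=\sum_{n\in\bN}(\bC L_n+\bC G_n)$, so it is a (degenerate) highest weight module. Otherwise $V$ is nontrivial and Theorem \ref{cuspidal} identifies it with a simple quotient of $\Gamma(\lambda,b)$ for suitable $\lambda,b\in\bC$. Collecting the two cases produces precisely the trichotomy in the statement.

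The step requiring the most care is the reduction $C=0$ in the cuspidal case together with the bookkeeping of the trivial module, rather than any substantive difficulty. Concretely, I must justify the equivalence between simple cuspidal $\fs$-modules and simple cuspidal $\bar{\fs}$-modules asserted at the start of Section 4: $C$ is central, hence acts by a scalar on the simple module $V$, and the standard argument for the Virasoro algebra forces that scalar to vanish for uniformly bounded modules. Once this is in hand there is no remaining obstacle, and the theorem follows by quoting Theorem \ref{cuspidal} and Lemma \ref{appN=1}.
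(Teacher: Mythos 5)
Your proposal is correct and follows essentially the same route as the paper: the paper's proof of Theorem \ref{ts2} is exactly the assembly of Lemma \ref{appN=1} (non-cuspidal case) and Theorem \ref{cuspidal} (cuspidal case), with the reduction to $\bar{\fs}$ via the vanishing of $C$ on cuspidal modules stated at the start of Section 4. Your additional care about the trivial module and the scalar action of $C$ only makes explicit what the paper leaves implicit.
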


\noindent{\bf Acknowledgement:} Y. Cai is partially supported by NSF of China (Grant 11801390). D. Liu is partially supported by NSF of China (Grant 11971315£¬11871249). R. L\"{u} is partially supported by NSF of China (Grant 11471233, 11771122, 11971440).


\

Y. Cai: Department of Mathematics, Soochow University, Suzhou 215006, P. R. China. Email: yatsai@mail.ustc.edu.cn

\

D. Liu: Department of Mathematics, Huzhou University, Zhejiang Huzhou, 313000, China. Email:  liudong@zjhu.edu.cn

\

R. L\"{u}: Department of Mathematics, Soochow University, Suzhou 215006, P. R. China. Email: rlu@suda.edu.cn

\end{document}